\newcommand{\R}{\mathbb{R}}
\newcommand{\C}{\mathcal{C}}
\newcommand{\I}{\mathcal{I}}
\newcommand{\Tr}{\mathrm{Tr}}
\newcommand{\osc}{\mathrm{osc}}
\theoremstyle{plain}
\newtheorem{defi}{Definition}[section]
\newtheorem{prop}[defi]{Proposition}
\newtheorem{teo}[defi]{Theorem}
\newtheorem{lema}[defi]{Lemma}
\newtheorem{remark}[defi]{Remark}
\theoremstyle{definition}
\theoremstyle{remark}
\numberwithin{equation}{section}
\begin{document}

\title[]{Interior regularity results for fractional elliptic equations that degenerate with the gradient}

\author[]{Disson dos Prazeres}
\address{Disson dos Prazeres:
Departamento de Matem\'atica , Universidade Federal de Sergipe,
Sala 19, São Cristóvão-SE, BRAZIL.
\newline {\tt disson@mat.ufs.br}
}

\author[]{Erwin Topp}
\address{
Erwin Topp:
Departamento de Matem\'atica y C.C., Universidad de Santiago de Chile,
Casilla 307, Santiago, CHILE.
\newline {\tt erwin.topp@usach.cl}
}

\date{\today}

\begin{abstract} 
	In this paper we obtain interior regularity estimates for viscosity solutions of nonlocal Dirichlet problems that degenerate when the gradient of the solution vanishes. Interior H\"older estimates are obtained when the order of the fractional diffusion is less or equal than one, and Lipschitz estimates when it is bigger than one. In the latter case, the estimates are robust enough to conclude interior $C^{1, \alpha}$ regularity by an improvement of the flatness procedure, which is possible when the nonlocal term is close enough to a second-order diffusion.
\end{abstract}

\keywords{Regularity, Degenerate Elliptic Equations, Nonlocal Operators, Viscosity Solutions}
\subjclass[2010]{35R09, 35R11, 35B65, 35D40}
\maketitle

%%%%%%%%%%%%%%%%%%%%%%%%%%%%%%%%%%%%%%%%%%%%%%%%%%%%%%%%%%%%%%%%%%%%%%%%%%%%%%%%%%%%%%%%%%%%%%%%%%%%%%%%%%%%%%%%%%%%%%%
%%%%%%%%%%%%%%%%%%%%%%%%%%%%%%%%%%%%%%%%%%%%%%%%%%%%%%%%% Introduccion %%%%%%%%%%%%%%%%%%%%%%%%%%%%%%%%%%%%%%%%%%%%%%%%
%%%%%%%%%%%%%%%%%%%%%%%%%%%%%%%%%%%%%%%%%%%%%%%%%%%%%%%%%%%%%%%%%%%%%%%%%%%%%%%%%%%%%%%%%%%%%%%%%%%%%%%%%%%%%%%%%%%%%%%

\section{Introduction}\label{secIntroduction}

In this paper we study interior regularity estimates for viscosity solutions $u$ to nonlinear elliptic problems with the form
\begin{equation}\label{maineq}
-|Du(x)|^\gamma \I(u,x) = f(x) \quad \mbox{for} \ x \in B_1,
\end{equation}
where $\gamma > 0$, $f \in L^\infty(B_1)$, $Du(x)$ stands for the gradient of the unknown function $u$ at $x$, and $\I(u, x)$ is a nonlocal operator, uniformly elliptic in the sense of Caffarelli and Silvestre~\cite{CS1,CS2}. To be more precise, we consider $\sigma \in (0,2)$ and $0 < \lambda \leq \Lambda < \infty$ and a family of symmetric kernels $\mathcal K_0$, given by measurable functions $K: \R^N \setminus \{ 0 \} \to \R$ such that
\begin{equation}\label{eliptico}
\lambda \frac{C_{\sigma,N}}{|x|^{N + \sigma}} \leq K(x) \leq \Lambda  \frac{C_{\sigma, N}}{|x|^{N + \sigma}}, \quad x \neq 0,
\end{equation}
where $C_{\sigma, N} > 0$ is a normalizing constant to be specified later.
For each $K \in \mathcal K_0$  and a function $u: \R^N \to \R$, we denote
\begin{equation}\label{operadorlineal}
I_K(u,x) = \mathrm{P.V.} \int_{\R^N} [u(y) - u(x)]K(x - y)dy,
\end{equation}
where $\mathrm{P.V.}$ stands for the Cauchy principal value of the integral. %, and $C_\sigma > 0$ is a normalizing constant such that $C_\sigma \to 0$ as $\sigma \to 2^-$. 

Notice that for each $K$, $I_K$ is a linear operator and it is defined for functions $u$ sufficiently smooth at $x$ and satisfying certain growth conditions at infinity, namely
\begin{equation}\label{L1sigma}
\| u \|_{L^1_\sigma(\R^N)} := \int_{\R^N}\frac{|u|}{1+|y|^\sigma}dy < +\infty.
\end{equation}

We say that such a function belongs to the class $L^1_\sigma(\R^N)$, and in particular, bounded functions belong to this class.

In the case $K(x) = C_{N, \sigma}|x|^{-(N + \sigma)}$, the normalizing constant $C_{N, \sigma}$ is taken in such a way $I_K = \Delta^{\sigma/2}$, the fractional Laplacian of order $\sigma$, and has the property that $\Delta^{\sigma/2} \to \Delta$ as $\sigma \to 2^-$ in an adequate functional framework, see~\cite{Hitch}. This stability property is going to be crucial in this note.

Then, for a two-parameter family of kernels $\{ K_{ij} \}_{ij} \subseteq \mathcal K_0$, the nonlinear operator $\I$ takes the form
\begin{equation}\label{operador}
\I(u,x) = \inf_{i} \sup_j I_{K_{ij}}(u,x).
\end{equation}

The main novelty here is the presence of the gradient term $|Du|^\gamma$ in~\eqref{maineq}, since the problem becomes degerate as soon as the gradient of the solution vanishes. Roughly, the information coming from the equation is missed when the gradient is null.

Equation~\eqref{maineq} is the nonlocal analog to second-order, fully nonlinear equations with the form
\begin{equation}\label{tangenteq}
-|Du|^\gamma F(D^2u) = f \quad \mbox{in} \ B_1,
\end{equation}
where $F: \mathbb S^{N} \to \R$ is a nonlinear operator acting on the set of symmetric matrices $\mathbb S^{N}$, uniformly elliptic in the sense that there exists constants $0 < \lambda \leq \Lambda < +\infty$ such that
$$
\lambda \Tr(N) \leq F(M + N) - F(M) \leq \Lambda \Tr(N), \quad \mbox{for all} \ M, N \in \mathbb S^N, \ N \geq 0,
$$
where the last inequality is understood in the sense of matrices.
%already studied in 
%The theory of regularity for second order local equations that involving the gradient has been widely studied in the last decades due to the great interest in its applications, and beautiful results have already been obtained in this context, see 
This equation appears as a fully nonlinear version of equations that degenerate with the gradient, showing a similar degeneracy behavior compared with more studied equations involving, for example, the $p$-Laplace operator in divergence form, for which regularity results are addressed in the standard weak formulation in Sobolev spaces. 

Due to the fully nonlinear nature of the problem~\eqref{tangenteq}, the viscosity solution's framework is a consistent notion of weak solution to address this problem, see~\cite{usersguide}. 
In this setting, interior regularity results for~\eqref{tangenteq} and similar problems have been addressed by several authors, see for instance~\cite{BD1,BD2,I, IS,DFQ, AR} and references therein. 
In the first part of this paper, we basically follow the procedure of Birindelli and Demengel developed in a series of papers, see for instance~\cite{BD1, BD2, BD3}, where the authors obtain H\"older and Lipschitz estimates for problems with the degenerate elliptic structure of~\eqref{tangenteq} exploiting the nowadays well-known method of Ishii-Lions~\cite{IL}. In the context of viscosity solutions for nonlocal problems presented in~\cite{BI}, and adapting to our degenerate case the Ishii-Lions method for nonlocal problems used in~\cite{BChI}, we are able to conclude the following interior estimates for~\eqref{maineq}.
\begin{teo}\label{first result}
	Let $f \in L^\infty(B_1)$ and $\I$ an operator with the form~\eqref{operador}. Let $u \in L^\infty_{loc} \cap L^1_\sigma$ be a viscosity solution to problem~\eqref{maineq}. Then,
	\begin{itemize}
		\item If $0<\sigma<1$ then $u\in C^{\sigma}$ and 
		$$
		[u]_{C^{\sigma}(B_{1/2})}\leq C (\|u\|_{L^1_\sigma(\R^N)}+\|u\|_{L^\infty(B_1)}+\|f\|_{L^\infty(B_1)}).
		$$
		
		\item If $\sigma = 1$, then $u \in C^{\alpha}$ for every $\alpha \in (0,1)$, and
		$$
		[u]_{C^{\alpha}(B_{1/2})}\leq C_\alpha (\|u\|_{L^1_\sigma(\R^N)}+\|u\|_{L^\infty(B_1)}+\|f\|_{L^\infty(B_1)}).
		$$

		\item If $1<\sigma<2$ then $u\in C^{0,1}$ and
		$$
		[u]_{C^{0,1}(B_{1/2})}\leq C_0(\|u\|_{L^1_\sigma(\R^N)}+\|u\|_{L^\infty(B_1)}+\|f\|_{L^\infty(B_1)}),
		$$
		and the constant $C_0$ is uniformly bounded as $\sigma \to 2^-$.
	\end{itemize}
\end{teo}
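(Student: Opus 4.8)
The plan is to prove the three estimates simultaneously by the nonlocal Ishii--Lions method, the three cases differing only in the choice of an auxiliary concave modulus. After a harmless initial rescaling we may assume $\|u\|_{L^\infty(B_1)}+\|u\|_{L^1_\sigma(\R^N)}+\|f\|_{L^\infty(B_1)}\le 1$. Fix $x_0\in B_{1/2}$; it is enough to produce a constant $C$ of the announced dependence such that
$$
u(x)-u(x_0)\le C\,\phi(|x-x_0|)\quad\text{and}\quad u(x_0)-u(x)\le C\,\phi(|x-x_0|)\qquad\text{for }x\in B_1,
$$
with $\phi(t)=t^{\sigma}$ when $0<\sigma<1$, $\phi(t)=t^{\alpha}$ for a fixed $\alpha\in(0,1)$ when $\sigma=1$, and $\phi(t)=t-\omega_0 t^{1+\kappa}$ with $\kappa\in(0,1)$ fixed and $\omega_0$ small when $1<\sigma<2$. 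In the last case $\phi$ is comparable to $t$ near the origin, so the pointwise bound becomes the Lipschitz estimate; in the case $\sigma=1$ the construction forces the constant to blow up as $\alpha\to1$, accounting for the restriction $\alpha<1$.

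To obtain such a bound I would introduce the doubled function
$$
\Phi(x,y)=u(x)-u(y)-L\,\phi(|x-y|)-\tfrac{M}{2}\big(|x-x_0|^{2}+|y-x_0|^{2}\big)
$$
on $\bar B_{3/4}\times\bar B_{3/4}$ and argue by contradiction. Choosing $M$ large (depending only on $\|u\|_{L^\infty(B_1)}$) confines any positive maximum point $(\bar x,\bar y)$ to the interior, with $\bar x\ne\bar y$, and since $L\phi(|\bar x-\bar y|)\le u(\bar x)-u(\bar y)\le 2$ one also gets $|\bar x-\bar y|\le\theta(L)\to0$ as $L\to\infty$. The first decisive point is \emph{non-degeneracy}: the gradients carried by the natural test functions at $\bar x$ and $\bar y$ are $p_{\pm}=L\phi'(|\bar x-\bar y|)\,e\pm M(\cdot-x_0)$ with $e=(\bar x-\bar y)/|\bar x-\bar y|$, and taking $L$ large relative to $M$ makes $|p_{\pm}|$ comparable to $L\phi'(|\bar x-\bar y|)>0$. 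Hence \eqref{maineq} is non-degenerate at the maximum point and the factor $|p_{\pm}|^{\gamma}$ is a strictly positive multiplier that can be carried through the argument (this is the analogue of the Birindelli--Demengel dichotomy "small gradient versus usable equation", resolved here on the side of the usable equation by the size of $L$).

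Next I would write the viscosity sub/supersolution inequalities for \eqref{maineq} at $\bar x$ and $\bar y$, divide by $|p_{\pm}|^{\gamma}$, and use that the $\inf\sup$ form \eqref{operador} is squeezed between the Pucci-type extremal operators $\mathcal M^{+}_{\mathcal K_0}$ and $\mathcal M^{-}_{\mathcal K_0}$ associated with the bounds \eqref{eliptico}. This reduces matters to estimating the nonlocal increments of $z\mapsto L\phi(|\bar x-\bar y+z|)$, plus the harmless quadratic terms, at the two points. Splitting $\R^{N}=B_{\delta}\cup(B_{1/4}\setminus B_{\delta})\cup(\R^{N}\setminus B_{1/4})$: on $B_{\delta}$ the strict concavity of $\phi$, quantified through $\phi''<0$ and the standard decomposition of the unit sphere according to whether $z$ is almost parallel or almost orthogonal to $e$, produces the main strictly negative contribution, of the order of $-cL$ times a negative power of $|\bar x-\bar y|$ integrated against $|z|^{-N-\sigma}$, while the quadratic terms contribute only $O(M\delta^{2-\sigma})$; on the intermediate annulus the increments are bounded by $\osc_{B_1}u$ times $O(\delta^{-\sigma})$; and on $\R^{N}\setminus B_{1/4}$ one combines $\Phi(\bar x+z,\bar y+z)\le\Phi(\bar x,\bar y)$ with the tail bound $\|u\|_{L^1_\sigma(\R^N)}$ from \eqref{L1sigma}. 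Multiplying back by $|p_{\pm}|^{\gamma}\sim(L\phi'(|\bar x-\bar y|))^{\gamma}$ and assembling the inequalities, the concavity gain dominates all other terms once $\delta$ is fixed small and then $L$ is taken large depending on $N,\sigma,\lambda,\Lambda,\gamma,\delta$ and on $\|f\|_{L^\infty(B_1)},\|u\|_{L^\infty(B_1)},\|u\|_{L^1_\sigma(\R^N)}$; this contradicts $\Phi(\bar x,\bar y)>0$, so $\Phi\le0$ and the pointwise estimate holds with $C\sim L$.

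For $\sigma\le1$ this yields directly the stated $C^{\sigma}$, resp. $C^{\alpha}$, seminorm bound; for $1<\sigma<2$ the comparability $\phi(t)\simeq t$ gives the Lipschitz bound. The uniformity of $C_0$ as $\sigma\to2^{-}$ follows because the normalizing constant $C_{\sigma,N}$ and every constant entering the three regions of the above splitting remain bounded in that limit, which is exactly what later makes the improvement-of-flatness step possible. I expect the main obstacle to be precisely this last point combined with the degeneracy: one has to keep the lower bound on $|p_{\pm}|$, the size of $L$, and the concavity gain from $\phi''$ quantitatively matched through the factor $|p_{\pm}|^{\gamma}$, uniformly in $\sigma$ near $2$ and uniformly in the degenerating modulus when $\sigma=1$ — a careful but delicate bookkeeping of how each term scales in $L$, $\delta$ and $|\bar x-\bar y|$.
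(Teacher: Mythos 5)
Your proposal follows essentially the same route as the paper: the nonlocal Ishii--Lions doubling argument with the same concave moduli ($t^{\sigma}$, $t^{\alpha}$, and $t-t^{1+\kappa}$ in the three regimes), the large test-function gradient used to neutralize the factor $|Du|^{\gamma}$, the cone decomposition around $e=\widehat{\bar x-\bar y}$ to extract the strictly negative term, and the $L^1_\sigma$ tail bound; the only cosmetic difference is your quadratic localization versus the paper's cutoff $\psi(y)$. One small caution for the write-up: the decisive negative contribution must be harvested on a cone whose radius and aperture scale like powers of $|\bar x-\bar y|$ (as in Corollary 9 of \cite{BCCI-Lip}, giving $-cL|e|^{1-\sigma+\alpha(N+2-\sigma)}$), not on a ball $B_\delta$ of fixed radius, since the Taylor expansion of $\phi(|e+z|)$ is only valid for $|z|\lesssim|e|$.
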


As we mentioned, the proof of this result is a consequence of Ishii-Lions method and elliptic estimates ``in the direction of the gradient" already proven by Barles, Chasseigne and Imbert in~\cite{BChI} (for H\"older estimates), and the same authors together with Ciomaga in~\cite{BCCI-Lip} (for Lipschitz estimates). Roughly speaking, the ``doubling variables" method in the Ishii-Lions argument determines a test function with a large gradient, which evaluated on the equation strenghten the ellipticity of the diffusion. 

%%%%%%%%%%%%%%%%%%%%%%%%%%%%%%%%%%%

\medskip

More interesting is the analysis of higher-order regularity. Both equations~\eqref{maineq} and~\eqref{tangenteq} show a strong nonlinear structure that prevents a ``linearization" mechanism and a  bootstrap procedure that allows to get regularity estimates for the gradient of their solutions. 
Thus, a different approach is necessary.
Related to this, Imbert and Silvestre in~\cite{IS} obtain interior $C^{1,\alpha}$ estimates for solutions of~\eqref{tangenteq} by an improvement of flatness of the solutions. The strategy is based on the study of equicontinuity properties for the auxiliar equation
\begin{equation*}
-|p + Du|^\gamma F(D^2 u) = f \quad \mbox{in} \ B_1,
\end{equation*}
in terms of $p \in \R^N$. Such an equation appears when it is measured how far is the graph of the solution from an hiperplane around each interior point. Using an itearive method relying on the stability property of viscosity solutions, the authors extract the regularity from the limit equation
\begin{equation}\label{hola1}
-|Du|^\gamma F(D^2 u) = 0 \quad \mbox{in} \ B_1,
\end{equation}
showing the equivalence among it and the equation
\begin{equation}\label{hola2}
F(D^2 u) = 0 \quad \mbox{in} \ B_1,
\end{equation}
for which it is known that interior $C^{1, \alpha}$ estimates hold, see Caffarelli and Cabr\'e~\cite{CC}.

\medskip

The proof of the equivalence among~\eqref{hola1} and~\eqref{hola2} in~\cite{IS} is intrinsically local, and in fact it is not true in the nonlocal framework, as the following example shows: consider the function $u: \R \to \R$ defined as $u(x) = x+1$ if $x \leq - 1$, $u(x) = 0$ if $x \in (-1,1)$, and $u(x) = x - 1$ if $x \geq 1$. For every $\sigma \in (1,2)$ and $\gamma > 0$, it is easy to see that $u$ satisfies
$$
-|u_x|^\gamma \Delta^{\sigma/2} u = 0 \quad \mbox{in} \ (-1,1), 
$$ 
(the fact that $u$ is unbounded here is not important since it belongs to the class $L^1_\sigma$), but we have that $x \mapsto \Delta^{\sigma}u(x)$ is a continuous function in $(-1,1)$ such that $\Delta^{\sigma}u(x) \to -\infty$ as $x \to -1^+$ and $\Delta^{\sigma}u(x) \to +\infty$ as $x \to 1^-$.
Despite this example does not imply the lack of interior $C^{1,\alpha}$ estimates for~\eqref{maineq}, it is illustrative about how the nonlocality combined with the gradient degeneracy create difficulties that do not arise in the second-order context. 

\medskip

Nevertheless, we provide a positive answer for interior $C^{1, \alpha}$ estimates for~\eqref{maineq} when the order of the nonlocal term $\I$ is sufficiently close to $2$, following the approximation prodecure described in~\cite{IS} in the spirit of section 8 of~\cite{CC}. We use the fact that if $\sigma$ is close to 2, the nonlocal problem~\eqref{maineq} approximate a second-order equation for which interior $C^{1, \alpha}$ estimates are at hand. However, in this procedure we require some extra continuity assumptions on the kernels $K$ definining $\I$ in order to identify the local limit associated to~\eqref{maineq}. 
\begin{teo}\label{second result}
	Let $f \in L^\infty(B_1)$ and $\I$ as in~\eqref{operador} defined through a family of kernels $\{ K_{ij} \}_{ij} \subset \mathcal K_0$, additionally satisfying the following property: there exist a modulus of continuity $\omega$ and a set $\{ k_{ij} \}_{i,j} \subset (\lambda, \Lambda)$ such that
	\begin{equation}\label{contK}
	|K_{ij}(x) |x|^{N + \sigma} - k_{ij}| \leq \omega(|x|), \quad |x| \leq 1.
	\end{equation}
	
	Then, there exists $\sigma_0 \in (1,2)$ close enough to $2$ such that for $\sigma_0<\sigma<2$ every bounded viscosity solution $u$ to~\eqref{maineq} is in $C^{1, \alpha}$ for some $\alpha \in (0,1)$, and
	$$
	[u]_{C^{1,\alpha}(B_{1/2})}\leq C_0(\|u\|_\infty+\|f\|_\infty^{\frac{\sigma-1}{1+\gamma}}).
	$$
\end{teo}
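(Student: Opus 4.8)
The plan is to run the improvement--of--flatness scheme of Imbert--Silvestre~\cite{IS}, in the spirit of Section~8 of~\cite{CC}, in the nonlocal setting, using as a limit model a second--order degenerate equation for which $C^{1,\alpha}$ estimates are already available. The hypothesis that $\sigma$ is close to $2$ enters twice: through the stability $\Delta^{\sigma/2}\to\Delta$, to identify a local limit operator, and through the smallness of the normalizing constant $C_{\sigma,N}=O(2-\sigma)$, to annihilate the nonlocal tails. As a first ingredient, Theorem~\ref{first result} yields, for $1<\sigma<2$, an interior Lipschitz bound that is stable as $\sigma\to2^-$; the same Ishii--Lions argument gives, with constants uniform in $p\in\R^N$ and in $\sigma$ near $2$, an interior Lipschitz bound for the tilted equation
\begin{equation*}
-|p+Du|^\gamma\,\I(u,x)=f\qquad\text{in }B_1,
\end{equation*}
which is the equation satisfied by $u-\ell$ when $\ell$ is affine with $D\ell=-p$. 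This uniform equicontinuity is what makes the compactness step below work.

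Next I would set up a compactness lemma. Fix $\alpha\in(0,1)$ smaller than both the Caffarelli--Cabr\'e exponent associated with the ellipticity constants $\lambda,\Lambda$ and the quantity $\tfrac{\sigma-1}{1+\gamma}$ (in particular $\alpha<\sigma-1$). Take sequences $\sigma_m\to2^-$; kernels $\{K^m_{ij}\}\subset\mathcal K_0$ satisfying~\eqref{contK} with a common modulus $\omega$ and constants $k^m_{ij}\in(\lambda,\Lambda)$; right--hand sides with $\|f_m\|_\infty\to0$; tilts $p_m$; and viscosity solutions $u_m$ of the corresponding tilted problems in $B_1$, normalized so that $u_m(0)=0$, $\osc_{B_1}u_m\le1$, and with the growth control $|u_m(x)|\le(1+|x|)^{1+\alpha}$ on $\R^N$, which, since $\alpha<\sigma_m-1$ for $m$ large, gives a uniform bound on $\|u_m\|_{L^1_{\sigma_m}(\R^N)}$. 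By the previous step the $u_m$ are uniformly Lipschitz on compact subsets of $B_1$, so along a subsequence $u_m\to v$ locally uniformly. In the equation for $u_m$, the contribution to $\I(u_m,\cdot)$ from the region $\{|y-x|\ge\tfrac14\}$ is $O\big(C_{\sigma_m,N}(\|u_m\|_{L^1_{\sigma_m}}+1)\big)\to0$, while the contribution from $\{|y-x|<\tfrac14\}$ passes to the limit thanks to~\eqref{contK} and $\Delta^{\sigma_m/2}\to\Delta$ (the isotropic leading coefficient $k^m_{ij}$ producing $k_{ij}\Tr(D^2v)$, with $k_{ij}$ a subsequential limit of $k^m_{ij}$). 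Hence, after a further subsequence, either $p_m\to p_\infty\in\R^N$ and $v$ solves $-|p_\infty+Dv|^\gamma F(D^2v)=0$ in $B_{1/2}$, or $|p_m|\to\infty$ and, dividing by $|p_m|^\gamma$, $v$ solves $F(D^2v)=0$; here $F(M):=\inf_i\sup_j k_{ij}\Tr(M)$ with $k_{ij}\in[\lambda,\Lambda]$ is uniformly elliptic. In both cases, absorbing $p_\infty$ via the translation $v+p_\infty\cdot x$ and using the equivalence of~\eqref{hola1} and~\eqref{hola2} from~\cite{IS} together with~\cite{CC}, one gets $v\in C^{1,\alpha}_{loc}$ with a universal interior estimate.

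With this, the improvement--of--flatness lemma follows by contradiction: there are universal $\rho\in(0,1/2)$, $\delta_0>0$ and $\sigma_0\in(1,2)$ such that, for $\sigma_0<\sigma<2$, if $u$ solves a tilted equation in $B_1$ with $u(0)=0$, $\osc_{B_1}u\le1$, $|u(x)|\le(1+|x|)^{1+\alpha}$ and $\|f\|_\infty\le\delta_0$, then there is an affine $\ell$ with $|D\ell|$ bounded and $\osc_{B_\rho}(u-\ell)\le\rho^{1+\alpha}$ (the value of $\rho$ being fixed so that the $C^{1,\alpha}$ limit $v$ is within $\tfrac12\rho^{1+\alpha}$ of its first--order Taylor polynomial at $0$ on $B_\rho$). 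One then rescales, $u_1(x)=\rho^{-(1+\alpha)}\big(u(\rho x)-\ell(\rho x)\big)$, and checks that $u_1$ solves a tilted equation of the same type: the rescaled kernels remain in $\mathcal K_0$ and keep~\eqref{contK} with the same $k_{ij}$ and $\omega$; the growth bound $|u_1(x)|\le(1+|x|)^{1+\alpha}$ reproduces (which is exactly why the flatness exponent must be $1+\alpha$ with $\alpha<\sigma-1$); the right--hand side becomes $f_1=\rho^{\,\sigma-1-\alpha(1+\gamma)}f(\rho\cdot)$, so $\|f_1\|_\infty\le\delta_0$ by the choice of $\alpha$; and although the tilt may grow like $\rho^{-\alpha}$, this only strengthens the non--degeneracy, so the lemma keeps applying. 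Iterating at the scales $\rho^k$ yields a Cauchy sequence of affine approximations of $u$ at each interior point, hence pointwise $C^{1,\alpha}$ differentiability with a universal bound; a covering of $B_{1/2}$ and undoing the initial normalization through the scaling of~\eqref{maineq}---which balances $u$ against $f$ and produces the factor $\|f\|_\infty^{(\sigma-1)/(1+\gamma)}$---complete the estimate.

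The main obstacle, and the reason $\sigma$ must be close to $2$ and the continuity assumption~\eqref{contK} is imposed, is the nonlocal tail, as the piecewise--affine example in the Introduction warns: along the iteration the rescaled solutions are controlled only on $B_1$ and grow away from it. The device is to carry out the whole iteration inside the class of functions with at most $(1+|x|)^{1+\alpha}$ growth, chosen so that (i) it is stable under the flatness rescaling, (ii) with $\alpha<\sigma-1$ it yields a uniform $L^1_\sigma$ bound, and (iii) combined with $C_{\sigma,N}=O(2-\sigma)$ it forces the tail contribution to vanish as $\sigma\to2^-$, leaving the correct local limit. Making this quantitative and uniform---together with the geometric decay of $\|f_k\|_\infty$, the control of the accumulated tilts, and the observation that large tilts only help---is the technical core of the argument.
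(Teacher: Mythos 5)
Your proposal is correct and follows essentially the same route as the paper: uniform (in $p$ and in $\sigma$ near $2$) Lipschitz estimates for the tilted equation from Theorem~\ref{first result}, a compactness/approximation lemma identifying the second-order degenerate Isaacs limit~\eqref{limIsaacs} via~\eqref{contK}, an improvement-of-flatness iteration in the growth class $1+|x|^{1+\alpha}$ with $\alpha<\min\{\bar\alpha,\tfrac{\sigma-1}{1+\gamma}\}$, and a Campanato-type conclusion. You make explicit a few points the paper leaves implicit (the vanishing of the nonlocal tail through $C_{\sigma,N}=O(2-\sigma)$ and the dichotomy on bounded versus unbounded tilts $p_m$), but the structure and the key exponent choices coincide with Lemma~\ref{approximation lemma} and Proposition~\ref{iterative teo}.
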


As we mentioned above, the proof of this result is performed by approximation to the second-order equation associated to~\eqref{maineq} as $\sigma \to 2^-$. By assumption~\eqref{contK}, this limiting problem takes the form
\begin{equation}\label{limIsaacs}
-|Du|^\gamma \inf_{i} \sup_{j} \Big{(} k_{ij} \Tr(D^2 u) \Big{)} = 0 \quad \mbox{in} \ B_1,
\end{equation}
for which, as we mentioned above, $C^{1, \alpha}$ estimates are proven in~\cite{IS}. In fact they show that solutions of (\ref{limIsaacs}) are solutions of 
$$
\inf_{i} \sup_{j} \Big{(} k_{ij} \Tr(D^2 u) \Big{)} = 0 \quad \mbox{in} \ B_1.
$$
Moreover, solutions of 
\begin{equation}
-|Du|^\gamma \inf_{i} \sup_{j} \Big{(} k_{ij} \Tr(D^2 u) \Big{)} = f(x) \quad \mbox{in} \ B_1,
\end{equation}
when $f\in L^\infty(B_1)$ are also $C^{1,\alpha}$. And in this case they ensure that this is the best regularity theory that you can expect even when $f$ is a constant and the second order operator is the Laplacian. The same phenomenon happens with the nonlocal case, since that $u=|x|^{1+\frac{\sigma-1}{1+\gamma}}$ is a solution of
$$
	-|Du|^\gamma (-\triangle^{\sigma/2}) = C \quad \mbox{in} \ B_1.
$$
In the spirit of \cite{ART} we show that in our case
$$
	\alpha<\min\left\lbrace\bar{\alpha},\frac{\sigma-1}{1+\gamma}\right\rbrace
$$
where $\bar{\alpha}>0$ is the regularity of the gradient of the solutions of~\eqref{limIsaacs}.

\medskip

Concerning existence and uniqueness, we can start mentioning that comparison principle fails for~\eqref{maineq}. We illustrate this in the homogeneous case $f \equiv 0$: consider a smooth nondecreasing function $\varphi:[0,+\infty) \to \R$ such that $\varphi(t) = 0$ for $t \in [0,1]$ and $\varphi(t) = 1$ for $t \geq 2$, and let $v(x) = \varphi(|x|)$. It is clear that $v$ is a (super)solution to~\eqref{maineq}. For $\epsilon \in (0,1)$, consider a nonnegative smooth function $\eta_\epsilon$ with support in the ball of radius $\epsilon$, such that $\eta_\epsilon(0) > 0$ and $||\eta_\epsilon||_{C^2(\R^N)} \to 0$ as $\epsilon \to 0^+$. Then, the function $u := v + \eta_\epsilon$ is a subsolution to~\eqref{maineq} for $\epsilon$ small enough, $u = v$ in $B_1^c$, but $u(0) > v(0)$. This is in contrast with the second-order counterpart of the problem (namely,~\eqref{hola1}) as it is shown by Barles and Busca in~\cite{BB}. This prevents the application of standard Perron's method to conclude existence of continuous viscosity solutions. 
Moreover, comparison principles have been used in other contexts to conclude equivalence among viscosity and other type of weak solutions, that a posteriori drives to conclusions about regularity, see for instance~\cite{JLM}. In view of the discussion above, this is not possible in our case.
Nevertheless, our regularity estimates in Theorem~\ref{first result} are robust enough to get compactness of the family of solutions of the ``vanishing viscosity" problem
	$$
	-\epsilon \Delta^{\sigma/2} u - |Du|^\gamma \I(u) = f \quad \mbox{in} \ B_1,
	$$
	in order to get existence by stability as $\epsilon \to 0^+$.

%A similar example can be used to {\br illustrate the lack of strong maximum principle}.

\medskip

Finally, we mention that our results can be easily extendable to other type of nonlocal equations involving nonsymmetric kernels, and lower order terms, but we prefer to state the results in this setting for simplicity.

%Caffarelli and Silvestre in \cite{CS1} showed that when $\I$ is $(\lambda,\Lambda)-L_\sigma$ elliptic , with $\sigma>1$  is possible to show regularity $C^{1,\tilde{\alpha}}$ of solutions. Already when $0<\sigma<1$  the best that we can obtain is $C^{0,\alpha}$. 

%%%%%%%%%%%%%%%%%%%%%%%%%%%%%%%%%%%%%%%%%%%%%%%%%%%%%%%%%%%%%%%%%%%%%%%%%%%

\bigskip

\noindent
{\bf Notion of solution and organization of the paper.}
We introduce the notion of solution we consider for~\eqref{maineq}. We introduce some notation, for a bounded function $u$, a smooth function $\varphi$, and a measurable set $A \subseteq \R^N$ we write
\begin{equation*}
I_K[A](u, \varphi, x) = C_\sigma \mathrm{P.V.} \int_{A} (u(x + z) - u(x)) K(z)dz.
\end{equation*}
%Solutions of (\ref{maineq}) will be considered in the viscosity sense which is the most appropriate notion for equations in non-divergence form,

In order to introduce the notion of viscosity solution we are going to use here, for $p \in \R^N$ fixed we consider the auxiliar problem
\begin{equation}\label{eqp}
-|Du + p|^{\gamma} \I(u) = f \quad \mbox{in} \ B_1.
\end{equation}

Of course, our main equation~\eqref{maineq} is included in~\eqref{eqp} with $p = 0$.

\begin{defi}
	An upper semicontinuous function $u: \R^N \to \R$ is a viscosity subsolution to~\eqref{eqp} if for any $x \in B_1$ and every function $\varphi \in C^2(\R^N)$ such that $u - \varphi$ attains a local maximum at $x$ and $D\varphi(x) \neq -p$, then
	$$
	-|D\varphi(x) + p|^\gamma \I_{\delta}(u, \varphi, x) \leq f(x),
	$$
	where we have denoted
	\begin{equation}\label{eval}
	\I_{\delta}(u, \varphi, x) = \inf_i \sup_j \Big{(} I_{K}[B_\delta](\varphi, D\varphi(x), x) +  I_{K}[B_\delta^c](u, D\varphi(x), x) \Big{)}.
	\end{equation}
	
	In an analogous way it is defined supersolution and solution to the problem.
\end{defi}

\medskip

The paper is organized as follows: in Section~\ref{teo1} we provide the proof of Theorem~\ref{first result}, which is proven in a slightly more general form in order to get Theorem~\ref{second result} in the subsequent Section~\ref{teo2}.

\section{Proof of Theorem~\ref{first result}.}\label{teo1}

In this section we provide the proof of Theorem~\ref{first result}, stating a slightly stronger result in the case $\sigma > 1$ which is going to be useful in the proof of Theorem~\ref{second result}.

%Here we going to use the method of the tangent equation, it consists in approach the solutions of our equation of the solution of a limit equation with a well developed regularity theory, to this it is important to show that sequences of the normalized solutions of (\ref{eq1}) converges, so the first step it is to show a equicontinuity of solutions. We concentrate in the case of the fractional Laplacian $(-\Delta)^{\sigma/2}$, see~\cite{Hitch}.

%For this, we introduce the following auxiliar problem: for $p \in \R^N$ we consider the equation
%\begin{equation}
%-|p + Du|^\gamma \I(u) = f \quad \mbox{in} \ B.
%\end{equation}

For $p \in \R^N$, we recall the auxiliar problem~\eqref{eqp} and obtain H\"older estimates for their solutions independent of $p$ through the following
\begin{prop}\label{propLipunif}
Assume $\sigma \in (1,2)$. For every $p \in \R^N$, each bounded viscosity solution $u$ to problem~\eqref{eqp} is Lipschitz continuous, and the Lipschtz constant depends only on $|u|_\infty, |f|_\infty$ and the data, but not on $p$.
\end{prop}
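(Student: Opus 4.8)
The plan is to establish the Lipschitz bound by the Ishii--Lions doubling-of-variables technique, adapted to the nonlocal setting of~\cite{BChI,BCCI-Lip} and to the gradient degeneracy. Fix $x_0 \in B_{1/2}$ (after a standard covering/scaling reduction, it suffices to bound $|u(x) - u(x_0)|$ for $x$ near $x_0$). For parameters $L, M > 0$ large to be fixed, introduce the auxiliary function
\[
\Phi(x,y) = u(x) - u(y) - L\phi(|x-y|) - M|x - x_0|^2 - M|y - x_0|^2,
\]
where $\phi$ is a concave modulus, e.g. $\phi(t) = t - t^{\theta}$ with $\theta \in (1,2)$ close to $1$ (this choice is what ultimately yields Lipschitz rather than merely H\"older regularity, and must be tuned to $\sigma$). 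Assume by contradiction that $\sup \Phi > 0$ for $L$ large; then $\Phi$ attains a positive interior maximum at some $(\bar x, \bar y)$ with $\bar x \neq \bar y$, and the penalization forces $\bar x, \bar y$ to stay inside $B_1$ and $|\bar x - \bar y|$ to be small once $L \gg M$.

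The core of the argument is to evaluate the equation at $\bar x$ (as a subsolution) and at $\bar y$ (as a supersolution) using the test functions coming from $\Phi$. The test-function gradient at $\bar x$ is $q := L\phi'(|\bar x - \bar y|)\frac{\bar x - \bar y}{|\bar x - \bar y|} + 2M(\bar x - x_0)$, which has magnitude comparable to $L\phi'(|\bar x-\bar y|) \sim L$ — in particular it is bounded away from $0$, so the degenerate factor $|Du + p|^\gamma$ at the contact points is bounded below by $c\,|q + p|^\gamma$. Here I must split into the two regimes already handled in the local case: either $|q + p| \gtrsim L$, in which case the coefficient $|q+p|^\gamma$ is large and helps, or $|q+p|$ is of moderate size, in which case one still has the positivity needed to use the equation but must be more careful. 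Subtracting the two inequalities and estimating the nonlocal terms: the singular part $I_K[B_\delta]$ is controlled using the $C^2$ bound on $L\phi$ and the concavity of $\phi$, which produces a \emph{negative} (good) contribution of order $-c\,L\,|\bar x - \bar y|^{\theta - \sigma}$ in the direction of the gradient — this is the nonlocal analog of the Ishii--Lions ellipticity-in-the-gradient-direction estimate from~\cite{BChI,BCCI-Lip}. The far part $I_K[B_\delta^c]$ is bounded by $C\|u\|_{L^\infty} + C\|u\|_{L^1_\sigma}$ using~\eqref{eliptico}, and the right-hand side contributes $\|f\|_{L^\infty}$ divided by the (bounded below) degenerate coefficient. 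Balancing these, for $L$ large enough the good negative term dominates all the others and we reach a contradiction with $\Phi(\bar x,\bar y) > 0$, hence $\Phi \leq 0$ everywhere, which gives $u(x) - u(x_0) \leq L\phi(|x - x_0|) + 2M|x-x_0|^2 \leq C L|x - x_0|$ near $x_0$; symmetry in $x \leftrightarrow x_0$ and the arbitrariness of $x_0$ upgrade this to the uniform Lipschitz estimate.

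The main obstacle I anticipate is making the nonlocal ellipticity estimate \emph{quantitative and uniform in} $p$ while the coefficient $|q + p|^\gamma$ varies. When $p$ is huge compared to $L$, the gradient of the test function is a negligible perturbation of $p$, so $|q+p|^\gamma \approx |p|^\gamma$ is large and divides out the $f$-term favorably; when $|p|$ is comparable to or smaller than $L\phi'$, one needs that $|q + p|$ does not accidentally vanish — this is where one uses that $q$ has a component of size $\sim L\phi'(|\bar x - \bar y|)$ along $\bar x - \bar y$ together with an argument (as in~\cite{BD1,BCCI-Lip}) ruling out near-cancellation, possibly by first treating the degenerate contact set separately. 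A secondary technical point is the correct order of choosing the constants ($\delta$, then $\theta$ as a function of $\sigma$, then $M$, then $L$) so that the singular integral estimate closes; and one must verify that $D\varphi(\bar x) \neq -p$ and $D\varphi(\bar y) \neq -p$ so that the viscosity inequalities are actually available at the contact points, which again follows from $|q| \sim L$ being large. Once these are in place the contradiction scheme runs as in the non-degenerate nonlocal Ishii--Lions argument.
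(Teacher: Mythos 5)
Your overall scheme is the paper's: a nonlocal Ishii--Lions doubling argument with a concave modulus $\varphi(t)=t-t^{1+\alpha}$, the ellipticity-in-the-gradient-direction estimate of Barles--Chasseigne--Ciomaga--Imbert, and a case split on $|p|$. But the point you yourself flag as ``the main obstacle'' --- that $|q+p|$ might vanish when $|p|$ is comparable to $|q|\sim L\varphi'$ --- is left unresolved, and your claim that the non-degeneracy ``follows from $|q|\sim L$ being large'' is false as stated: if $p\approx -q$ the coefficient $|q+p|^\gamma$ degenerates and the viscosity inequalities are simply unavailable (the definition requires $D\varphi(\bar x)\neq -p$). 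The paper removes this regime by the \emph{order} in which the constants are chosen, in two separate lemmas. For $|p|\geq c_0$ one rescales the equation by $a_0=|p|^{-1}$, so the coefficient becomes $|\hat p+a_0\xi|^\gamma$ with $|\hat p|=1$; since the Lipschitz constant $L$ is fixed universally (in terms of $\|f\|_\infty$, $\osc(u)$ and the data) \emph{before} $c_0$, taking $c_0$ of order $L$ gives $a_0|\xi|\leq 1/2$, hence $|\hat p+a_0\xi|\geq 1/2$. Then, for $|p|\leq c_0$, one re-runs the argument with a possibly larger $L$ chosen in terms of $c_0$ so that $|\xi|\geq |p|+1$ and hence $|p+\xi|\geq 1$. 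In both regimes the degenerate coefficient is bounded below by a universal constant, there is no intermediate case where anti-alignment can cause cancellation, and the final Lipschitz constant is the larger of the two choices of $L$, independent of $p$. Your proposal needs this (or an equivalent) quantitative arrangement to close.

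A secondary imprecision: the good negative term does not come from the singular part $I_K[B_\delta]$, which tends to $0$ as $\delta\to 0$ for the fixed $C^2$ test function. It comes from the medium-range contribution over a cone $\C=\{z\in B_\rho: |\langle \hat e,z\rangle|\geq(1-\eta)|z|\}$ aligned with $e=\bar x-\bar y$, where the maximality of $(\bar x,\bar y)$ allows one to replace increments of $u$ by increments of $\varphi$, and the concavity of $\varphi$ along $\hat e$ yields $I_K[\C](\varphi,e)\leq -cL|e|^{1-\sigma+\alpha(N+2-\sigma)}$ with $c$ uniform in $\sigma$. Choosing $\alpha$ small so that this exponent is at most $(1-\sigma)/2<0$, and recalling $|e|\leq L^{-1}\osc_{B_1}(u)$, this term dominates $\|f\|_\infty+\osc_{B_1}(u)+\|u\|_{L^1_\sigma}$ for $L$ large and produces the contradiction.
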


This result is a direct consequence of the following two lemmas.
\begin{lema}\label{lemaplarge}
	Assume $\sigma \in (1,2)$. There exists $c_0 > 1$ such that if $|p| \geq c_0$, each bounded viscosity solution to~\eqref{eqp} is Lipschitz continuous, with Lipschitz constant independent of $p$.
\end{lema}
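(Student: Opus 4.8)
The plan is to exploit the fact that when $|p|$ is large, the gradient term $|Du + p|^\gamma$ in \eqref{eqp} cannot degenerate too badly: if $u$ is Lipschitz with a moderate constant, then $|Du + p| \geq |p| - \mathrm{Lip}(u)$ stays bounded away from zero, so the equation behaves like a uniformly elliptic nonlocal equation with a bounded, strictly positive coefficient in front of $\I(u)$. The strategy is therefore a bootstrap-free, single-pass Ishii–Lions argument: double the variables with the test function $\phi(x,y) = L\,\omega(|x-y|) + M|x - x_0|^2$ on a small ball $B_r(x_0) \subset B_1$, where $\omega$ is a concave modulus (of the form $\omega(t) = t - \omega_0 t^{\beta}$ with $\beta > 1$, as in \cite{BChI, BCCI-Lip}) and $M$ is a large localization constant; then argue by contradiction that for $L$ large enough the maximum of $u(x) - u(y) - \phi(x,y)$ over $\bar B_r(x_0) \times \bar B_r(x_0)$ is nonpositive, which yields the interior Lipschitz bound.

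The key steps, in order, are: \emph{(i)} Reduce to an a priori estimate: assume the doubled maximum is attained at an interior point $(\bar x, \bar y)$ with $\bar x \neq \bar y$ (the boundary and diagonal cases being handled by the choice of $M$ and by continuity), so that we may test the sub/supersolution inequalities. \emph{(ii)} Record that the gradients of the test function at $\bar x$ and $\bar y$ are both comparable to $L\,\omega'(|\bar x - \bar y|) \approx L$ in size, but crucially, after adding $p$, their magnitudes are $\geq |p| - O(M r) - O(L)$; here is where $|p| \geq c_0$ enters — we do \emph{not} need the test-function gradient itself to be large, only $|D\phi + p|$, and for this it suffices that $|p|$ dominates, so we get $|D_x\phi(\bar x,\bar y) + p|^\gamma \geq (c_0/2)^\gamma$ and similarly at $\bar y$, a uniform positive lower bound independent of $L$. \emph{(iii)} Apply the nonlocal Jensen–Ishii lemma (in the form of \cite{BI, BChI}) to extract the nonlocal evaluations and subtract the two viscosity inequalities; the coefficients $|D_x\phi + p|^\gamma$ and $|D_y\phi + p|^\gamma$ are both bounded above and below by positive constants, so dividing through keeps the ellipticity intact. \emph{(iv)} Invoke the ellipticity estimate "in the direction of the gradient" of \cite{BChI} (for the $B_\delta$-part contributing a negative term $\sim -c\,L$ times a positive power coming from the concavity defect $\omega_0 t^\beta$) against the bounded-from-above contributions of the $M|x-x_0|^2$ term and the tail term controlled by $\|u\|_\infty$ and $\|u\|_{L^1_\sigma}$, plus $\|f\|_\infty$; choosing $L$ large enough (depending only on $|u|_\infty, |f|_\infty, M, r$ and the structural constants, \emph{not} on $p$) produces a contradiction.

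The main obstacle I expect is \emph{(iv)}: making the Ishii–Lions gain quantitative and uniform in $\sigma \in (1,2)$ and in $p$ simultaneously. One must check that the negative term produced by the concave modulus in the $B_\delta$-integral genuinely beats all the error terms with a margin that does not collapse as $\sigma \to 1^+$ (it need not be uniform as $\sigma \to 1^+$ here, only for fixed $\sigma > 1$, but for Theorem~\ref{second result} one will later want $\sigma \to 2^-$ uniformity) and does not blow up as $|p| \to \infty$. The resolution is that the coefficient $|D\phi + p|^\gamma$, though large when $|p|$ is large, appears on \emph{both} sides after subtracting the inequalities, and after dividing by the smaller of the two coefficients the residual discrepancy $\big||D_x\phi+p|^\gamma - |D_y\phi+p|^\gamma\big|$ is controlled by $\gamma (|p| + C)^{\gamma - 1} |D_x\phi - D_y\phi| \lesssim (|p|+C)^{\gamma-1} M|\bar x - \bar y|$, which is absorbed by the Ishii–Lions gain provided $L \gg (|p|+C)^{\gamma - 1}$ — but since the gain term is itself multiplied by the coefficient $\gtrsim |p|^\gamma$, the $|p|$-dependence cancels and the required smallness is in fact $|p|$-independent. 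Carefully tracking this cancellation is the delicate bookkeeping that the proof will need to carry out.
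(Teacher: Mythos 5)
Your proposal is correct and follows essentially the same route as the paper: an Ishii--Lions doubling with a concave Lipschitz modulus plus a localization term, using $|p|\ge c_0$ (with $c_0$ necessarily large relative to $L$, hence depending on $\|u\|_\infty$ and $\|f\|_\infty$, exactly as in the paper) to keep $|D\phi+p|^\gamma$ bounded away from zero, and then invoking the cone estimate of Barles--Chasseigne--Ciomaga--Imbert to produce the contradictory term $-cL$. The only difference is that the paper first rescales the equation by $a_0=|p|^{-1}$, so the coefficients $|\hat p+a_0\xi_i|^\gamma$ become uniformly bounded above \emph{and} below; this lets one divide each viscosity inequality by its own coefficient before subtracting, which makes the discrepancy/cancellation bookkeeping you anticipate in step \emph{(iv)} unnecessary.
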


\begin{proof}
	Scalling in terms of $|p|$, we consider the equivalent problem
	\begin{equation}
	-|\hat p + a_0 Du|^\gamma \I(u) = a_0^\gamma f,
	\end{equation}
	where $\hat p = p/|p|$ and $a_0 = |p|^{-1}$.
	
	\medskip
	
	Now, consider a nonnegative, smooth function $\tilde \psi: \R^N \to \R$ such that $\tilde \psi = 0$ in $B_{1/2}$ and $\tilde \psi = 1$ in $B_{3/4}^c$, and denote $\psi = (\osc_{\R^N}(u) + 1) \tilde \psi$. We also consider $\alpha \in (0,1)$ to be fixed small enough, and the function $\varphi: [0,+\infty) \to \R$ defined as $\varphi(t) = t - t^{1 + \alpha}$ for $t \in [0,t_0]$ and $\varphi(t) = \varphi(t_0)$ for all $t > t_0$, where $t_0 > 0$ is chosen sufficiently small in order to have $\varphi > 0$ in $(0,+\infty)$. Notice that $\varphi \in C^2(0,t_0) \cap C^1[0,t_0)$.
	
	We will prove that there exists a universal constant $\bar C > 0$ such that, taking 
	\begin{align}\label{choiceL}
	L = \bar C (|f|_\infty + \osc_{B_1}(u) + 1), 
	\end{align}
	then $u$ is Lipschitz continuous with Lipschitz constant $L$.
	
	We consider the function 
	$$
	\phi(x,y) := L \varphi(|x - y|) + \psi(y)
	$$ 
	and
	\begin{equation}\label{Phi}
	\Phi(x,y) = u(x) - u(y) - \phi(x,y), \quad x, y \in \R^N.
	\end{equation}
	
	By continuity, $\Phi$ attains its maximum in $\bar B_1 \times \bar B_1$ at $\bar x, \bar y$. If this maximum is nonpositive, then we have the interior Lipschitz result. By contradiction, we assume that the maximum is strictly positive. 
	
	In particular, by taking we have $L\varphi(x - y) \leq \osc_{B_1}(u)$, and therefore, by taking $L$ large in terms of $\osc(u)$, by the Lipschitz continuity of $\varphi$ we conclude that
	\begin{equation*}
	L | \bar x - \bar y| \leq \osc_{B_1}(u).
	\end{equation*}
	
	On the other hand, by the definition of $\psi$, we have $\bar y \in B_{3/4}$, and by taking $L$ larger (through $\bar C$ in~\eqref{choiceL}) if it is necessary, we conclude that $\bar x \in B_{7/8}$. Finally, note that $\bar x \neq \bar y$.
	
	Then, we write
	\begin{equation}\label{xis}
	\xi_1 = L\varphi'(|\bar x - \bar y|) \widehat{\bar x - \bar y}; \quad \xi_2 = \xi_1 - D\psi(\bar y),
	\end{equation}
	and for each $\delta \in (0,1)$, we can use the viscosity inequalities for $u$ at $\bar x$ and $\bar y$ to write
	\begin{align*}
	|\hat p + a_0 \xi_1|^\gamma \I_{\delta}(u, \phi(\cdot, \bar y), \bar x) & \geq a_0^\gamma f(\bar x), \\
	|\hat p + a_0 \xi_2|^\gamma \I_{\delta}(u, -\phi(\bar x, \cdot), \bar y) & \leq a_0^\gamma f(\bar y),
	\end{align*}
	where we have used the notation introduced in~\eqref{eval}.

	Then, we take $a_0 < 1$ small enough in terms of $L$ (which is going to be fixed in terms of $\osc (u)$ and the data, but not on $a_0$) to conclude that
	\begin{equation*}
	|\hat p + a_0 \xi_1|, |\hat p + a_0 \xi_2| \geq 1/2,
	\end{equation*}
	and dividing by the gradient, we conclude that
	\begin{align*}
	\I_{\delta}(u, \phi(\cdot, \bar y), \bar x) & \geq - C |f|_\infty, \\
	\I_{\delta}(u, -\phi(\bar x, \cdot), \bar y) & \leq C |f|_\infty,
	\end{align*}
	for some universal constant $C > 0$. Then, we substract these inequalities to get
	\begin{equation*}
	-2 C |f|_\infty \leq \I_{\delta}(u, \phi(\cdot, \bar y), \bar x) - \I_{\delta}(u, -\phi(\bar x, \cdot), \bar y),
	\end{equation*}
	and from now on we concentrate in the right-hand side. At this point, we follow the arguments presented in~\cite{BChI, BCCI-Lip} and that are nowadays well-known.
	
	%Notice that for all $z \in \R^N$ and all unitary vector $\xi$, we have 
	%$$
	%%|z|^2 \leq \langle (I_N + r_p \xi \otimes \xi)z,z \rangle \leq (1 + r_p) |z|^2.
	%|z|^2 \leq \langle \sigma_p(\xi) z,z \rangle \leq (1 + r_p) |z|^2,
	%$$
	%from which $\sigma_p$ can be regarded as an ``uniformly elliptic" jump function.
	
	For all $|z| \leq 1/8$, we have $\bar x + z, \bar y + z \in \bar B_1$. Then, there exists a kernel $K$ in the family such that
	\begin{equation}\label{Roma}
	-C(|f|_\infty + \osc_{B_1} (u) + \|u\|_{L^1_\sigma}) - 1 \leq I_1 + I_2, 
	\end{equation}
	where 
	\begin{align*}
	I_1 & := I_K[B_{\delta}](\phi(\cdot, \bar y), \bar x) - I_K[B_{\delta}](-\phi(\bar x, \cdot), \bar y), \\
	I_2 & := I_K[B_{1/8} \setminus B_\delta](u, \xi_1, \bar x) - I_K[B_{1/8} \setminus B_\delta](u, \xi_2, \bar y),
	\end{align*}
	and the constant $C > 0$ depends on the ellipticity constants.
	
	At this point, denote $e = \bar x - \bar y$, $\hat e = e/|e|$ and 
	\begin{align}\label{C}
	\C = \{ z \in B_\rho : |\langle \hat e, z \rangle| \geq (1 - \eta) |z|\},
	\end{align}
	for constants $\eta \in (0,1)$ and $\rho \in (0,1/8))$ to be fixed. 
	
	Using the maximality of $(\bar x, \bar y)$, it is possible to get that
	\begin{align*}
	I_K[\C \setminus B_\delta](u, \xi_1, \bar x) & \leq I_K[\C \setminus B_\delta](\varphi, e), \\
	I_K[\C \setminus B_\delta](u, \xi_2, \bar y) & \geq -I_K[\C \setminus B_\delta](\varphi, e) - C\osc(u),
	\end{align*}
	where the constant $C > 0$ is uniformly bounded above and below as $\sigma \to 0$.
	
	Again by the maximality of $(\bar x, \bar y)$, for every set $\mathcal O \subset \R^N$ such that $\bar x + z, \bar y + z \in \bar B_1$, we have
	\begin{equation*}
	I_K[\mathcal O](u, \tilde \varphi, x) - I_K[\mathcal O](u, \tilde \varphi, y) \leq I_K[\mathcal O](\psi, y) \leq C \Lambda \osc_{B_1}(u),
	\end{equation*}
	the latter inequality by the smoothness of $\psi$.
	
	Then, we replace these inequalities in~\eqref{Roma} to conclude that
	\begin{equation*}
	-C(a_0^\gamma |f|_\infty + \osc_{B_1} (u) + \|u\|_{L^1_\sigma} + 1) \leq 2 I_K[\C \setminus B_\delta](\varphi, e) + I_1,
	\end{equation*}
	and at this point we notice that the term $I_1 \to 0$ as $\delta \to 0$, meanwhile, by Dominated Convergence Theorem, we have $I_K[\C \setminus B_\delta](\varphi, e) \to I_K[\C](\varphi, e)$ as $\delta \to 0$.

	We make $\delta \to 0$, and take $\eta = c_1 |e|^{2\alpha}, \rho = c_1 |e|^\alpha$ for some constant $c_1 > 0$ universal. Using the estimates of Corollary 9 in~\cite{BCCI-Lip}, we have that
	\begin{equation}\label{crucial}
	\I[\C](\varphi, e) \leq - c L |e|^{1 - \sigma + \alpha (N + 2 - \sigma)},
	\end{equation}
	for some $c > 0$ is an universal constant. We stress on the fact that there exists $\epsilon_0 \in (0,1)$ just depending on $N$ such that $\epsilon_0 \leq c \leq \epsilon_0^{-1}$ for all $\sigma \in (1,2)$. 
	
	At this point, we fix $\alpha > 0$ small enough to get 
	$$
	-\theta := 1 - \sigma + \alpha(N + 2 - \sigma) \leq (1 - \sigma)/2 < 0.
	$$
	
	Thus, replacing the estimates above into~\eqref{Roma} we arrive at
	\begin{equation*}
	-C(|f|_\infty + \osc_{B_1} (u) + \|u\|_{L^1_\sigma} + 1) \leq -c L |e|^{-\theta}  
	\end{equation*}

    Recalling that $|e| \leq L^{-1} \osc_{B_1}(u)$,  taking $L \geq \osc(u)$ we have 
	\begin{equation*}
	-C(|f|_\infty + \osc_{B_1} (u) + \|u\|_{L^1_\sigma} + 1) \leq -c L,
	\end{equation*}
	and from here, we arrive at a contradiction with the choice of $L$ in~\eqref{choiceL}. This completes the proof.
%	
%	we get that
%	\begin{equation*}
%	-C(|f|_\infty + \osc (u) + 1) \leq -c L^{-\theta} \osc(u)^\theta,
%	\end{equation*}
%	and this is a contradiction if we take $L$ large enough in terms of the data.
\end{proof}

Now we present the result in the case $p$ in~\eqref{eqp} is small.
\begin{lema}\label{lemapsmall}
	Let $\sigma \in (1,2)$ and let $c_0$ be as in the previous lemma. If $|p| \leq c_0$, each bounded viscosity solution to~\eqref{eqp} is Lipschitz continuous, with Lipschitz constant independent of $p$.
\end{lema}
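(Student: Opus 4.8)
The plan is to run the very same doubling-of-variables scheme as in Lemma~\ref{lemaplarge}, with the simplification that \emph{no rescaling is required}. Although~\eqref{eqp} is genuinely degenerate where $|Du + p|$ is small, the localizing penalization $\varphi(t) = t - t^{1+\alpha}$ satisfies $\varphi'(0) = 1$, so at the doubling maximum the gradient of the test function has norm comparable to the target Lipschitz constant $L$; since $|p| \leq c_0$ with $c_0$ universal, this gradient is bounded below by a universal constant as soon as $L$ is large, and the degeneracy of the equation never comes into play.

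Concretely, I would keep the auxiliary functions $\tilde\psi$, $\psi$, $\varphi$ of Lemma~\ref{lemaplarge}, set $L = \bar C(|f|_\infty + \osc_{B_1}(u) + 1)$ with a universal $\bar C$ to be enlarged finitely many times (so that, among other things, $L \geq 16 c_0$ and $L \geq 8\|D\psi\|_{L^\infty}$), consider $\Phi(x,y) = u(x) - u(y) - L\varphi(|x-y|) - \psi(y)$ as in~\eqref{Phi}, and argue by contradiction assuming its maximum over $\bar B_1 \times \bar B_1$, attained at $(\bar x,\bar y)$, is strictly positive. Exactly as in Lemma~\ref{lemaplarge}: positivity forces $\bar x \neq \bar y$ (otherwise $\Phi(\bar x,\bar x) = -\psi(\bar x) \leq 0$), then $L|\bar x - \bar y| \leq \osc_{B_1}(u)$, hence $|\bar x - \bar y|$ is small and, after enlarging $\bar C$, $\bar y \in B_{3/4}$ and $\bar x \in B_{7/8}$. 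Writing $\xi_1, \xi_2$ as in~\eqref{xis}, smallness of $|\bar x - \bar y|$ gives $\varphi'(|\bar x - \bar y|) \geq 1/2$, whence $|\xi_1| \geq L/2$, $|\xi_2| \geq L/4$, and therefore $|p + \xi_1|, |p + \xi_2| \geq L/4 - c_0 \geq 1$. In particular $D\varphi(\bar x) \neq -p$ (and likewise the test gradient at $\bar y$ differs from $-p$), so the viscosity inequalities of the definition apply; dividing them by $|p + \xi_i|^\gamma \geq 1$ yields, for every $\delta \in (0,1)$,
$$
\I_\delta(u, \phi(\cdot,\bar y), \bar x) \geq -|f|_\infty, \qquad \I_\delta(u, -\phi(\bar x,\cdot), \bar y) \leq |f|_\infty.
$$

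From here the argument is \emph{identical} to the second half of the proof of Lemma~\ref{lemaplarge}: subtract the two inequalities, select one kernel $K$ of the family realizing the inf-sup up to an error as in~\eqref{Roma}, split the resulting expression into the singular contribution $I_1$ near the origin (which tends to $0$ as $\delta \to 0$), the cone contribution on $\C$ from~\eqref{C} with $\eta = c_1|e|^{2\alpha}$, $\rho = c_1|e|^\alpha$, where Corollary~9 of~\cite{BCCI-Lip} gives $\I[\C](\varphi,e) \leq -cL|e|^{\,1-\sigma+\alpha(N+2-\sigma)}$ with $c$ universal, and the remaining contribution, bounded by $C\osc_{B_1}(u)$ using the smoothness of $\psi$ and the maximality of $(\bar x,\bar y)$. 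Choosing $\alpha$ small so that $\theta := \sigma - 1 - \alpha(N+2-\sigma) > 0$ and using $|e| \leq L^{-1}\osc_{B_1}(u)$, one reaches $-C(|f|_\infty + \osc_{B_1}(u) + \|u\|_{L^1_\sigma} + 1) \leq -cL$, contradicting the choice of $L$ once $\bar C$ is large. Hence $\Phi \leq 0$ on $\bar B_1 \times \bar B_1$, and since $\psi \equiv 0$ on $B_{1/2}$ this gives the desired $p$-independent Lipschitz bound on $B_{1/2}$. The only step that is really new relative to Lemma~\ref{lemaplarge}, and the one to be careful with, is the lower bound $|p + \xi_i| \geq 1$: here it is produced by $\xi_i$ being large (a consequence of $\varphi'(0) = 1$ together with $|\bar x - \bar y|$ small and $L$ large) rather than by $p$ being large, and once it is in hand the whole nonlocal computation is insensitive to the value of $p$.
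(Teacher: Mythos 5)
Your proposal is correct and follows essentially the same route as the paper: the authors likewise keep the test functions $\varphi,\psi$ of Lemma~\ref{lemaplarge}, note that $L$ large in terms of $\osc(u)$ and $c_0$ forces $|p+\xi_1|,|p+\xi_2|\geq 1$ at the doubling maximum (since $\varphi'(0)=1$ makes $|\xi_i|$ comparable to $L$ while $|p|\leq c_0$ is bounded), and then repeat the nonlocal computation verbatim. Your write-up actually supplies more detail than the paper's deliberately sketchy version, and the details are sound.
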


\begin{proof}
	The proof is esentially the same as the one of the previous lemma, so we will be sketchy.
	
	We consider the same functions $\varphi, \psi$ in Lemma~\ref{lemaplarge} and argue by contradiction. Thus, there exists a maximum point $(\bar x, \bar y) \in B_1 \times B_1$ for the function~\eqref{Phi}, with $\bar x \neq \bar y$.
	
	Then, denoting $\xi_1, \xi_2$ as in~\eqref{xis}, we can take $L$ large enough just in terms of  $\osc(u)$ and $c_0$ in order to have
	\begin{equation*}
	|p + \xi_1|, |p + \xi_2| \geq 1.
	\end{equation*}
	
	From this point, we follow the same proof above to conclude the result by fixing $L$ adequate.
	%	
	%In this case we consider the Ishii-Lions method but with H\"older profile. For $\alpha \in (0,1)$ we consider 
	%\begin{equation}
	%(x,y) \mapsto u(x) - u(y) - L |x - y|^\alpha - \psi(y), \quad x, y \in \R^N,
	%\end{equation}
	%where $L > 0$ is going to be fixed later. The localization term $\psi$ is as above. 
	%
	%Let  $(x,y)$ be the maximum point of the function above. We write 
	%$$
	%p_x = p + L |x - y|^{\alpha - 1} \widehat{(x- y)}, \quad p_y = p_x + D\psi(y),
	%$$
	%and write the viscosity inequalities
	%\begin{equation*}
	%-|p_x|^\gamma \I(u, x) \leq f(x), \quad -|p_y|^\gamma \I(u, y) \geq f(y).
	%\end{equation*}
	%
	%We substract both inequalities, and from this proceed as follows: if $|p_x| \geq |p_y|$ we write
	%\begin{equation}\label{Brasil}
	%-2 |f|_\infty \leq |p_y|^\gamma (\I_\delta(u,x) - \I_\delta (u,y)) + (|p_x|^\gamma - |p_y|^\gamma) \I_\delta(u,x),
	%\end{equation}
	%and if $|p_x| \leq |p_y|$ we write
	%\begin{equation*}
	%-2 |f|_\infty \leq |p_x|^\gamma (\I_\delta(u,x) - \I_\delta (u,y)) + (|p_x|^\gamma - |p_y|^\gamma) \I_\delta(u,y).
	%\end{equation*}
	%
	%The arguments on each case are completely symmetric, and therefore we focus on~\eqref{Brasil}....
\end{proof}

Now we are in position to provide the 

\medskip
\noindent
{\bf \textit{Proof of Theorem~\ref{first result}:}} The case $\sigma > 1$ is already proven in Proposition~\ref{propLipunif}. 

In the case $\sigma \in (0,1)$, we follow the same procedure as in Lemmas~\ref{lemapsmall}-\ref{lemaplarge} with $p=0$ and considering the function $\varphi(t) = Lt^\sigma$ in order to get the H\"older profile of the solution. The crucial estimate~\eqref{crucial} is obtained using the estimates of $T_2$ in the Step 3 of Theorem 1-$(i)$ in~\cite{BChI}. Similarly for the case $\sigma = 1$, where the same estimates are possible with the function $\varphi(t) = Lt^\alpha$ for any $\alpha \in (0,1)$.
\qed

\begin{remark}\label{rmkChJ}
	Some of our results in Theorem~\ref{first result} apply to other types of degenerate elliptic nonlocal operators. For instance, Chasseigne and Jakobsen in~\cite{ChJ} introduce and interesting nonlocal version of $p$-Laplace type operators with the form
\begin{equation*}
\Delta^{\sigma/2}_p u(x) = C_\sigma \int \limits_{\R^N} [u(x + j_p(Du(x))z) - u(x) - \mathbf{1}_B j_p(Du(x)) \cdot z] |z|^{-(N + \sigma)}dz,
\end{equation*}
for $p > 2$ and a jump function $j_p : \R^N \setminus \{ 0 \} \to \R^{N \times N}$ defined as
\begin{align*}
j_p(q) = |q|^{\frac{p-2}{2}} (I_N + r_p \hat q \otimes \hat q), 
\end{align*}
where $\hat q = q/|q|$ for $q \neq 0$ and $r_p > 0$ is a normalizing constant.

The method used in the proof of Theorem~\eqref{first result} (or more speciffically, of Lemma~\ref{lemapsmall}) applies to problems with the form
$$
\Delta^{\sigma/2}_p u = f \quad \mbox{in} \ B_1,
$$
in order to get that bounded solutions to this problem are $C^{\sigma}_{loc}(B_1)$ if $\sigma < 1$, and $C^{\alpha}_{loc}(B_1)$ for $\alpha < \min \{ 1, \sigma \}$ if $\sigma \geq 1$. Nevertheless, Lipschitz bounds and equicontinuity results in the sense of Proposition~\ref{propLipunif} are not direct to adapt due to the localization term driven by $\psi$ in~\eqref{Phi}, and this is why we do not pursue in this direction.

It is interesting to mention that $\Delta^{\sigma/2}_p \to \Delta_p$ as $\sigma \to 2^-$.
\end{remark}

\section{Proof of Theorem~\ref{second result}.}\label{teo2}

As it can be seen in~\cite{IS} (see Lemma 6 there), equation~\eqref{limIsaacs} has the particularity that the notion of viscosity solution can be equivalently defined if we require test functions do not have null gradient. Such a property is unavoidable linked to the homogeneity of the problem. This is a crucial property in the following stability result, since solutions to~\eqref{maineq} request test functions with no-null gradient.
\begin{lema}\label{approximation lemma}
For every $M>0$, $\epsilon, \alpha \in (0,1)$ and a modulus of continuity $\omega:[0,+\infty) \to \R_+$, there exist $\eta \in (0,1)$ and  $\sigma_0 \in (1 + \alpha, 2)$ such that, for each $p \in \R^N$ and a viscosity solution $u$ to 
$$
-\eta \leq -|Du+p|^{\gamma} \I_\sigma(u) \leq \eta \quad \mbox{in} \ B_1,
$$
satisfying the following conditions
\begin{itemize}	
\item[$(i)$] $\I_{\sigma}$ is a nonlocal operator as in~\eqref{operador} with $\sigma \in (\sigma_0, 2)$ and kernels $\{ K_{ij} \}$ satisfying~\eqref{contK} with respect to $\omega$,
\item[$(ii)$] $|u(x)-u(y)|\leq \omega(|x-y|) \quad \mbox{for} \ x,y \in \bar B_1$,
\item[$(iii)$] $|u(x)|\leq M(1 + |x|^{1+\alpha}) \quad \mbox{for } x \in \R^N$,
\end{itemize}
there exist a solution $h$ to problem~\eqref{limIsaacs} such that
$$
\sup_{B_{1}}|u-h|\leq \epsilon.
$$

\end{lema}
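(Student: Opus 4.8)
The plan is to argue by contradiction via a compactness-and-stability argument, which is the standard scheme for an "approximation lemma" of this type (cf. Lemma 6 and its surroundings in \cite{IS} and section 8 of \cite{CC}). Suppose the statement fails for some fixed $M$, $\epsilon$, $\alpha$, $\omega$. Then for every $\eta = 1/n$ and every $\sigma_0 = 2 - 1/n$ there exist $\sigma_n \in (2 - 1/n, 2)$, a vector $p_n \in \R^N$, an operator $\I_{\sigma_n}$ of the form \eqref{operador} with kernels $\{K^{n}_{ij}\}$ satisfying \eqref{contK} with constants $\{k^n_{ij}\} \subset (\lambda, \Lambda)$, and a viscosity solution $u_n$ of $-1/n \leq -|Du_n + p_n|^\gamma \I_{\sigma_n}(u_n) \leq 1/n$ in $B_1$ satisfying $(ii)$ and $(iii)$, such that $\sup_{B_1}|u_n - h| > \epsilon$ for every solution $h$ of \eqref{limIsaacs}. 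The first task is to pass to a subsequence along which all the data converge: $\sigma_n \to 2$; by $(ii)$ and Arzelà--Ascoli, $u_n \to u_\infty$ locally uniformly in $\bar B_1$ (and $(ii)$, $(iii)$ pass to the limit, so $u_\infty$ satisfies them too); and by a diagonal argument over a countable dense set of indices, or by passing to the $\inf$--$\sup$ of the limiting linear operators, $k^n_{ij} \to k^\infty_{ij} \in [\lambda, \Lambda]$ for each $i,j$. The vectors $p_n$ need a small additional argument: if $|p_n| \to \infty$ one rescales as in Lemma~\ref{lemaplarge}; otherwise (up to a subsequence) $p_n \to p_\infty \in \R^N$. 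Actually, since the limiting equation \eqref{limIsaacs} does not see $p$ (by the homogeneity remark at the start of Section~\ref{teo2}, test functions with nonvanishing gradient suffice and the $|Du|^\gamma$ factor is killed), it is enough to keep $p_n$ bounded; the $|p_n| \to \infty$ case follows by the same scaling trick and produces the equation $\I(u)=0$ directly.

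The heart of the matter is the stability step: I must show the limit $u_\infty$ is a viscosity solution of \eqref{limIsaacs}, i.e. of $-|Du_\infty|^\gamma \inf_i \sup_j (k^\infty_{ij} \Tr(D^2 u_\infty)) = 0$ in $B_1$. Fix a point $x_0 \in B_1$ and a $C^2$ test function $\varphi$ touching $u_\infty$ from above at $x_0$ with $D\varphi(x_0) \neq 0$ (the homogeneity of \eqref{limIsaacs} lets us restrict to this case). By the standard perturbation argument in the theory of viscosity solutions for nonlocal equations (\cite{BI}, \cite{usersguide}), one produces points $x_n \to x_0$ at which $u_n - \varphi$ (or a slightly perturbed test function agreeing with $u_n$ outside a small ball) has a local maximum, with $D\varphi(x_n) \to D\varphi(x_0) \neq 0$, so for large $n$ the gradient is nonzero and the subsolution inequality applies:
$$
-|D\varphi(x_n) + p_n|^\gamma \big(I_{K^n}[B_\delta](\varphi, D\varphi(x_n), x_n) + I_{K^n}[B_\delta^c](u_n, D\varphi(x_n), x_n)\big) \leq 1/n + o(1),
$$
for a suitable kernel $K^n$ realizing the $\inf$--$\sup$ up to error. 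Dividing by the (bounded, bounded-below) gradient factor, the task reduces to showing that the bracketed nonlocal quantity converges, after passing to a further subsequence, to $\inf_i \sup_j k^\infty_{ij} \Tr(D^2\varphi(x_0))$. This is where assumption \eqref{contK} and $\sigma_n \to 2$ are used: the $C^2$ part $I_{K^n}[B_\delta](\varphi, \cdot, x_n)$ is, by \eqref{contK}, a kernel close to $k^n_{ij} C_{\sigma_n,N}|z|^{-(N+\sigma_n)}$, and the classical computation (\cite{Hitch}, or Lemma in \cite{CS2}) gives that $C_{\sigma,N}\int_{B_\delta} \delta^2$-order Taylor remainder $\to \Tr(D^2\varphi(x_0))/N \cdot (\text{const})$ with the normalization chosen so the limit is exactly $k^\infty_{ij}\Tr(D^2\varphi(x_0))$ up to $O(\omega(\delta)) + O(\delta^{2-\sigma_n})$ errors; the tail term $I_{K^n}[B_\delta^c](u_n, \cdot, x_n)$ is $O(\|u_n\|_{L^1_{\sigma_n}} \delta^{-\sigma_n})$-controlled and, crucially, because $u_n \to u_\infty$ locally uniformly and $(iii)$ gives a uniform $L^1_{\sigma_n}$-type bound (the growth exponent $1+\alpha < \sigma_n$), this tail converges to the corresponding tail of $u_\infty$ and then, as $\delta \to 0$, contributes the remaining part of the second-order expansion. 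Taking $\delta \to 0$ after $n \to \infty$, with the errors uniformly small, yields $-|D\varphi(x_0)|^\gamma \inf_i\sup_j(k^\infty_{ij}\Tr(D^2\varphi(x_0))) \leq 0$, i.e. $u_\infty$ is a subsolution; the supersolution inequality is symmetric.

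Finally, having shown $u_\infty$ solves \eqref{limIsaacs}, setting $h := u_\infty$ we get $\sup_{B_1}|u_n - u_\infty| \to 0$, contradicting $\sup_{B_1}|u_n - h| > \epsilon$ for all $n$. I expect the main obstacle to be the stability step, and specifically making the interchange of the limits $n \to \infty$ and $\delta \to 0$ rigorous: one needs the second-order Taylor error on $B_\delta$ and the tail estimate on $B_\delta^c$ to be controlled \emph{uniformly in $n$} (uniformly in $\sigma_n$ near $2$), using \eqref{contK} to replace the kernel $K^n_{ij}$ by its normalized model and the stability of $C_{\sigma,N}$ and of the fractional Laplacian as $\sigma \to 2^-$ from \cite{Hitch}; a secondary technical point is the correct handling of the $\inf$--$\sup$ structure (the realizing indices $i,j$ depend on $n$ and must be extracted along the subsequence, or one argues by the usual $\inf\sup$ semicontinuity inequalities), together with the bookkeeping needed to accommodate arbitrary bounded $p_n$ and, separately, the rescaled regime $|p_n| \to \infty$.
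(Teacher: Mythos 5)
Your proposal follows essentially the same route as the paper: a contradiction/compactness argument in which sequences $\eta_k\to 0$, $\sigma_k\to 2$ and solutions $u_k$ are extracted, conditions $(ii)$--$(iii)$ give local uniform convergence via Arzel\`a--Ascoli, and stability of viscosity solutions identifies the limit as a solution of~\eqref{limIsaacs}, contradicting the assumed lower bound $\sup_{B_1}|u_k-h|>\epsilon$. The paper's own proof is in fact only a two-line sketch of this scheme; the details you supply (the treatment of $p_n$, the extraction of the limiting constants $k^\infty_{ij}$, and the uniform-in-$n$ control of the Taylor remainder and tail needed to pass from $\I_{\sigma_n}$ to the local Isaacs operator using~\eqref{contK}) are exactly the points the paper leaves implicit, and your account of them is sound.
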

\begin{proof}
By contradiction, we assume there exists $M > 0$, $\epsilon, \alpha \in (0,1)$, a modulus of continuity $\omega$, and sequences $(p_k) \subset \R^N$, $\eta_k \to 0$, $\sigma_k \to 2$, and a family of functions $\{ u_k \}$ such that $(ii), (iii)$ above holds with $u = u_k$ and solving, 
in the viscosity sense, the problem
%
%$$
%|u_k(x)|\leq M(|x|+1)^{1+\alpha}, \quad \mbox{for} \ x \in \R^N,
%$$
%and
%$$
%	|u_k(x)-u_k(y)|\leq w(|x-y|) \forall x,y\in B_{R_k}.
%$$
%linked by
$$
-\eta_k \leq -|D u_k + p_k|^{\gamma} \I_{\sigma_k}(u_k) \leq \eta_k \quad \mbox{in} \ B_1,
$$
and such that
\begin{equation}\label{contra}
\sup_{B_{1}}|u_k-h|> \epsilon,
\end{equation}
where $h$ is a solution to~\eqref{limIsaacs}.

\medskip

However, the uniform continuity condition $(ii)$ together with the uniform integrability condition $(iii)$ allows us to pass to the limit and, up to subsequences, we have $u_k$ converge locally uniformly in $B_1$ to a solution to~\eqref{limIsaacs}. This is a contradiction to~\eqref{contra}.
%$$
%|u_k|\leq M(|x|+1)^{1+\alpha}.
%$$ 
%
%Thus, by stability results of viscosity solutions, $u_k \to u_0$ where $u_0$  solves~\eqref{limIsaacs}.
%
%Then, for $k$ large enough we have
%$$
%	\sup_{B_{1/2}}|u_k-u_0|\leq \epsilon,
%$$
%and this is a contradiction.
\end{proof}

%%%%%%%%%%%%%%%%%%%%%%%%%%%%%%%%%%%%%%%%%%%%%%%%%%%%%%%%%%%%%%%%%%%%%%%%%%%%%%%%%%%%%

The last challenge is iterate the Lemma \ref{approximation lemma} and be able to guarantee that there exists a sequence $l_k$ of first order polynomials that approximate $u$ solution to~\eqref{maineq} in a suitable way, and at different scales.

\begin{prop}\label{iterative teo}
Let $\gamma > 0$. There exist $\rho, \alpha, \eta \in (0,1)$, $\sigma_0 \in (1,2)$ and $C > 0$ such that, for each continuous viscosity solution $u$ to the problem 
\begin{align}\label{aracaju1}
-\eta \leq -|Du|^\gamma \I_{\sigma}(u) \leq \eta \quad \mbox{in} \ B_1, 
\end{align}
with $\|u\|_{L^\infty(\R^N)} \leq 1$ and $\sigma \in (\sigma_0, 2)$, there exist a sequence $l_k=a_k+p_kx$ such that
\begin{equation}\label{aracaju2}
\sup_{B_{\rho^k}}|u-l_k|\leq \rho^{k(1+\alpha)},
\end{equation}
such that
\begin{equation}\label{estimatescoeficientpoly}
\left\lbrace\begin{array}{ll}
	|a_{k+1}-a_k|\leq & C\rho^{(1+\alpha)k}\\ 
	|b_{k+1}-b_k|\leq & C\rho^{\alpha k}. 
\end{array}\right. 
\end{equation}	
\end{prop}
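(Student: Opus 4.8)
The plan is to prove \eqref{aracaju2}--\eqref{estimatescoeficientpoly} by induction on $k$, using Lemma~\ref{approximation lemma} at each scale together with the $C^{1,\bar\alpha}$ estimates for solutions of~\eqref{limIsaacs} coming from~\cite{IS}. First I would fix the parameters in the right order: let $\bar\alpha \in (0,1)$ be the interior $C^{1,\bar\alpha}$ exponent for~\eqref{limIsaacs} with constant $C_{IS}$, choose $\alpha \in (0,\bar\alpha)$ and then $\rho \in (0,1/2)$ small enough that $2 C_{IS}\, \rho^{\bar\alpha - \alpha} \le \tfrac{1}{2}$ (so that the "improvement of flatness" gain beats the loss from rescaling), and finally apply Lemma~\ref{approximation lemma} with $M = 2$, with $\epsilon = \tfrac12 \rho^{1+\alpha}$, and with a modulus $\omega$ fixed once and for all (e.g. the H\"older/Lipschitz modulus furnished by Theorem~\ref{first result} applied on $B_1$ to solutions of~\eqref{aracaju1}, after noting $\|u\|_\infty \le 1$); this determines $\eta$ and $\sigma_0$. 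The base case $k=0$ is trivial with $l_0 \equiv 0$ since $\|u\|_\infty \le 1$.

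For the inductive step, assume $l_k = a_k + p_k\cdot x$ satisfies \eqref{aracaju2}. I would rescale: set
$$
v(x) = \frac{u(\rho^k x) - l_k(\rho^k x)}{\rho^{k(1+\alpha)}}, \qquad x \in B_1,
$$
so that $\|v\|_{L^\infty(B_1)} \le 1$ by the inductive hypothesis, and check that $v$ solves an equation of the same type. Writing $p := \rho^{-\alpha k} p_k$, the function $v$ satisfies, in the viscosity sense, $-\tilde\eta \le -|Dv + p|^\gamma \tilde{\I}_\sigma(v) \le \tilde\eta$ in $B_1$, where $\tilde{\I}_\sigma$ is built from the rescaled kernels $\tilde K_{ij}(z) = \rho^{k(N+\sigma)} K_{ij}(\rho^k z)$ — these still lie in $\mathcal K_0$ and still satisfy~\eqref{contK} with the \emph{same} modulus $\omega$ and the \emph{same} constants $k_{ij}$, which is exactly why the hypotheses of Lemma~\ref{approximation lemma} are scale-invariant — and $\tilde\eta = \rho^{k(\sigma - (1+\alpha)(1+\gamma))}\eta$; here one needs $\sigma_0 > (1+\alpha)(1+\gamma)$ so that $\rho^{k(\cdots)} \le 1$ and $\tilde\eta \le \eta$ (this forces the dependence of $\alpha$ on $\gamma$ announced in the statement). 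One also verifies $(ii)$ for $v$ via the global modulus $\omega$ (possibly after enlarging it by a bounded factor coming from the affine term, absorbed by taking $\rho$ small), and $(iii)$ with $M=2$ using $\|u\|_\infty \le 1$ together with the growth of $l_k$ controlled by the second line of \eqref{estimatescoeficientpoly} accumulated over the previous steps.

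Now Lemma~\ref{approximation lemma} gives a solution $h$ of~\eqref{limIsaacs} with $\|v - h\|_{L^\infty(B_1)} \le \tfrac12 \rho^{1+\alpha}$, hence $|h(0)| \le 1 + \tfrac12\rho^{1+\alpha} \le 2$. By the $C^{1,\bar\alpha}$ estimate of~\cite{IS}, the affine function $\ell(x) = h(0) + Dh(0)\cdot x$ satisfies $|h(x) - \ell(x)| \le C_{IS}\,|x|^{1+\bar\alpha}$ and $|Dh(0)| \le C_{IS}$ on, say, $B_{1/2}$. Then on $B_\rho$,
$$
|v(x) - \ell(x)| \le \|v-h\|_{L^\infty(B_\rho)} + |h(x) - \ell(x)| \le \tfrac12\rho^{1+\alpha} + C_{IS}\,\rho^{1+\bar\alpha} \le \rho^{1+\alpha},
$$
by the choice of $\rho$. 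Scaling back, $l_{k+1}(x) := l_k(x) + \rho^{k(1+\alpha)}\ell(\rho^{-k}x)$ is affine, satisfies \eqref{aracaju2} at level $k+1$, and the increments are $a_{k+1} - a_k = \rho^{k(1+\alpha)} h(0)$ and $p_{k+1} - p_k = \rho^{k\alpha} Dh(0)$, giving \eqref{estimatescoeficientpoly} with $C = 2 C_{IS}$. I expect the main obstacle to be the bookkeeping in the rescaling: one must confirm that (a) the rescaled kernels genuinely preserve~\eqref{contK} with identical constants $k_{ij}$ and modulus $\omega$ — true because the condition is stated for $|x| \le 1$ and $\rho^k < 1$, so the rescaled variable stays in range; (b) the exponent $\sigma - (1+\alpha)(1+\gamma)$ is nonnegative for all $\sigma$ close to $2$, which pins down how small $\alpha$ must be taken relative to $\gamma$; and (c) the a priori modulus of continuity hypothesis $(ii)$ survives the rescaling and the subtraction of $l_k$, which is where the uniform estimate of Theorem~\ref{first result} — applied not to $u$ directly but uniformly across scales to the $v$'s — is essential, and is the reason Lemma~\ref{approximation lemma} was phrased with the same $\omega$ at every step. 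Once these are in place, the passage from \eqref{aracaju2}--\eqref{estimatescoeficientpoly} to $u \in C^{1,\alpha}$ with the asserted estimate is the standard summation argument, carried out in the proof of Theorem~\ref{second result}.
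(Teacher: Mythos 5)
Your overall strategy is exactly the paper's: rescale by $w_{k+1}(x)=\rho^{-k(1+\alpha)}(u(\rho^kx)-l_k(\rho^kx))$, apply Lemma~\ref{approximation lemma} at each scale to produce an $h$ solving~\eqref{limIsaacs}, use the interior $C^{1,\bar\alpha}$ estimate for~\eqref{limIsaacs} to extract the next affine approximant, and close the induction with $\rho^{\bar\alpha-\alpha}$ small. However, your scaling computation contains an error that, as written, kills the argument for $\gamma\ge 1$. Writing $Du(\rho^kx)=\rho^{k\alpha}Dw_{k+1}(x)+p_k$, one gets $|Du|^\gamma=\rho^{k\alpha\gamma}|Dw_{k+1}+\rho^{-k\alpha}p_k|^\gamma$ and $\I_\sigma(u,\rho^kx)=\rho^{k(1+\alpha-\sigma)}\tilde\I_\sigma(w_{k+1},x)$, so the rescaled right-hand side carries the factor $\rho^{k(\sigma-1-\alpha(1+\gamma))}$, not $\rho^{k(\sigma-(1+\alpha)(1+\gamma))}$: you have an extra $-\gamma$ in the exponent. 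The correct requirement is $\sigma-1-\alpha(1+\gamma)>0$, i.e.\ $\alpha<(\sigma-1)/(1+\gamma)$, which is achievable by taking $\alpha$ small once $\sigma_0>1$; your condition $\sigma_0>(1+\alpha)(1+\gamma)$ is unsatisfiable for any $\gamma\ge 1$ since $\sigma<2$, so the proposition would become vacuous precisely where it is needed.

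A second point is only asserted, not proved, and it is where the actual inductive work lies. Hypothesis $(iii)$ of Lemma~\ref{approximation lemma} is a \emph{global} growth bound on $w_{k+1}$ over all of $\R^N$ (the operator is nonlocal, so the tails matter), and the inductive hypothesis~\eqref{aracaju2} only controls $w_{k+1}$ on $B_1$. Outside $B_1$ the function $u-l_{k+1}$ is not small, and the crude bound obtained by summing~\eqref{estimatescoeficientpoly} produces a multiplicative constant in front of $|x|^{1+\alpha}$ that does not by itself stay below $1+|x|^{1+\bar\alpha}$. The paper resolves this by propagating the sharper statement $|w_k(x)|\le 1+|x|^{1+\bar\alpha}$ on all of $\R^N$ as the inductive invariant, verifying it separately in the regimes $\rho|x|\ge 1/2$ (using the previous global bound on $w_k$ plus $|\tilde l_k|\le\bar A(1+\rho|x|)$) and $\rho|x|\le 1/2$ (using $\|w_k-h_k\|_{L^\infty(B_1)}\le\rho^{1+\alpha}/2$ and the $C^{1,\bar\alpha}$ expansion of $h_k$), both times absorbing constants via the smallness condition $\rho^{\bar\alpha-\alpha}(1+\bar A)\le 1/100$. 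You should make this global growth bound the explicit inductive statement rather than~\eqref{aracaju2} alone; with that and the corrected exponent, your argument coincides with the paper's.
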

\begin{proof}
	In what follows, we consider universal constants $\bar A > 0, \bar \alpha \in (0,1)$ such that, for all function $h \in C(B_1)$ with $\| h \|_{L^\infty(B_1)} \leq 4$, and satisfying~\eqref{limIsaacs}, then 
	\begin{equation*}
	|h(0)|, |Dh(0)| \leq \bar A; \quad |h(x) - h(0) - Dh(0)x| \leq \bar A |x|^{1 + \bar \alpha}, \quad x \in B_1.
	\end{equation*}
	
	This is possible by classical interior $C^{1, \bar \alpha}$ estimates, see~\cite{CC}.
	
\medskip

By a slight modification of $u$, we can assume $u(0) = 0$.

We are going to fix $\alpha \in (0, \bar \alpha)$ such that $\sigma  - 1 - \alpha (1 + \gamma) > 0$, and $\rho \in (0,1)$ small enough in order to have
\begin{equation}\label{rho}
\rho^{\bar \alpha - \alpha}(1 +  \bar A) \leq \frac{1}{100}.
\end{equation}

Taken in such a way, we are going to pick $\eta$ small enough in order and $\sigma_0$ sufficiently close to $2$ (this does not affect the choice of $\alpha$) such that for each solution $u$ to~\eqref{aracaju1} satisfying the growth condition $|u(x)| \leq 1 + |x|^{1 + \bar \alpha}, \ x \in \R^N$ we have the existence of a function $h$ solving~\eqref{limIsaacs} such that $\| u - h \|_{L^\infty(B_1)} \leq \rho^{1 + \alpha}/2$. This is possible in view of Lemma~\ref{approximation lemma} since the equicontinuity property $(ii)$ is fullfilled by the regularity results proven in Theorem~\ref{first result}.
	
%Let's prove the result by induction using the previous approximation lemma as key step.
%$$
%	|u|^{\gamma}I(u)=f \mbox{ em }B_R.
%$$

We denote $l_0 = 0$, and we are going to construct inductively a sequence of linear functions $l_k$ with the form $l_k x = a_k + p_k x$ where $a_k \in \R, \ p_k \in \R^N$ and a sequence of functions $\{w_k\}_{k \geq 0}$ defined as
\begin{equation}\label{wk+1}
w_{k+1}(x) = \frac{u(\rho^{k}x) - l_k(\rho^{k}x)}{\rho^{k(1 + \alpha)}}, \quad x \in \R^N.
\end{equation}

It is easy to see that $w_{k}$ solves
\begin{equation*}
- \rho^{\sigma-\alpha\gamma-1-\alpha}\eta \leq -|D w_k + p_k|^{\gamma}\I(w_k) \leq \rho^{\sigma-\alpha\gamma-1-\alpha} \eta, \quad \mbox{in} \ B_{\rho^{-{k-1}}},
\end{equation*}
and therefore, taking $\alpha$ small enough and $\rho < 1/4$, in particular we have
\begin{equation*}
- \eta \leq -|D w_k + p_k|^{\gamma}\I(w_k) \leq \eta, \quad \mbox{in} \ B_{1}.
\end{equation*}

We construct the sequence $l_k$ as follows: once it is proven that 
\begin{equation}\label{aracaju}
%\left \{ \begin{array}{l}
|w_k(x)| \leq 1 + |x|^{1 + \bar \alpha}, \quad x \in \R^N,
%\mbox{and~\eqref{estimatescoeficientpoly} holds.} 
%\end{array} \right .
\end{equation}
then by the choice of the parameters above, we have the existence of a function $h_k \in C^{1, \bar \alpha}$ solving~\eqref{limIsaacs} such that $\| w_k - h_k\|_{L^\infty(B_1)} \leq \rho^{1 + \alpha}/2$.

Now we explain how to define $a_k, p_k$ in order to have~\eqref{aracaju} inductively. Once $w_k$ is constructed and satisfies~\eqref{aracaju}, we consider $h_k$ as above and define $\tilde a_k = h_k(0)$ and $\tilde p_k = D h_k(0)$, we write $\tilde l_k(x) = \tilde a_k + \tilde p_k x$ and define
\begin{equation}\label{lk}
l_{k + 1}(x) = l_k(x) + \rho^{k(1 + \alpha)} \tilde l_k(\rho^{-k}x), \quad x \in \R^N,
\end{equation}
or equivalently
$$
a_{k + 1} = a_k + \rho^{1 + \alpha} \tilde a_k; \quad p_{k + 1} = p_k + \rho^{\alpha} \tilde p_k.
$$

Starting with $w_0 = u$ and $l_0 = 0$, in view of definition~\eqref{wk+1}, the sequences $\{ w_k \}, \{ l_k \}$ are well-defined. Once~\eqref{aracaju} is proven,~\eqref{aracaju2} holds directly, and~\eqref{estimatescoeficientpoly} holds from~\eqref{lk} with $C = \bar A$.

\medskip

Thus, the rest of the proof is focused on an inductive proof for~\eqref{aracaju}, assuming it holds for $k$ and proving it for $k + 1$. For this, it is direct to check that
$$
w_{k + 1}(x) = \frac{w_k(\rho x) - \tilde l_k (\rho x)}{\rho^{1 + \alpha}}.
$$ 

Thus, we see that for $|x| \rho \geq 1/2$ we have
\begin{align*}
|w_{k + 1}(x)| \leq & \rho^{-(1 + \alpha)} (|w_k(\rho x)| + |\tilde l_k(\rho x)|) \\
\leq & \rho^{-(1 + \alpha)} (1 + \rho^{1 + \bar \alpha}|x|^{1 + \bar \alpha}) + \rho^{-(1 + \alpha)}\bar A (1 + \rho |x|) \\
%\leq & \rho^{\bar \alpha - \alpha} \Big{(} \rho^{-(1 + \bar \alpha)} + |x|^{1 + \bar \alpha} + \bar A \rho^{-(1 + \bar \alpha)} + \bar A \rho^{-\bar \alpha} |x|\Big{)} \\
\leq & \rho^{\bar \alpha - \alpha} (5 + 6 \bar A) |x|^{1 + \bar \alpha}.
\end{align*}

Then, by the choice of $\rho$ in~\eqref{rho}, we conclude $|w_{k + 1}(x)| \leq |x|^{1 + \bar \alpha}$.

\medskip

In the case $|x| \rho \leq 1/2$, we see that
\begin{align*}
|w_{k + 1}(x)| \leq & \rho^{-(1 + \alpha)} (|w_k(\rho x) - h_k(\rho x)| + |h_k(\rho x) - \tilde l_k(\rho x)|) \\
\leq & \rho^{-(1 + \alpha)}(\rho^{1 + \alpha}/2 + \bar A |\rho x|^{1 + \bar \alpha}) \\
\leq & 1/2 + \bar A \rho^{\bar \alpha - \alpha}|x|^{1 + \bar \alpha},
\end{align*}
and again by~\eqref{rho}, we conclude the result.
\end{proof}

\medskip

 Now we are in position to provide the
	
\noindent
{\bf \textit{Proof of Theorem~\ref{second result}:}}

First we consider 
$$
	\bar{u}=\frac{u}{\|u\|_\infty+(\eta^{-1}\|f\|_\infty^{\frac{\sigma-1}{1+\gamma}})}
$$
so $\|\bar{u}\|_\infty\leq 1$ and $\|f\|_\infty\leq \eta$ attending the assumptions of the Proposition \ref{iterative teo}. Now we will prove that there exists a first order polynomial $l_\infty=a_\infty+p_\infty x$ such that

$$
	|\bar{u}(x)-l_\infty(x)|\leq C_0|x|^{1+\alpha}.
$$
By (\ref{estimatescoeficientpoly}) we have that $a_k\in \R$ and $p_k\in \R^N$ are Cauchy sequences then $a_k\rightarrow a_\infty$ and $p_k\rightarrow p_\infty$. Moreover

$$
|l_\infty-l_k|\leq \sum_k^{\infty}|l_{k+1}-l_k|\leq C\rho^{(1+\alpha)k}.
$$
Finally we take $|x|\leq 1$ and $k$ such that $\rho^{k+1}\leq |x| < \rho^k$, so

$$
	|\bar{u}-l_\infty|\leq |l_\infty-l_k|+|u-l_k|\leq (1+C)\rho^{(1+\alpha)k},
$$
then
$$
	|\bar{u}-l_\infty|\leq \frac{(1+C)}{\rho^{1+\alpha}}|x|^{1+\alpha}.
$$
By the Campanato theory the above estimate assure us that

$$
[\bar{u}]_{C^{1+\alpha}(B_{1/2})}\leq C_0
$$
therefore
$$
[u]_{C^{1+\alpha}(B_{1/2})}\leq C_0(\|u\|_\infty+\|f\|_\infty^{\frac{\sigma-1}{1+\gamma}}).
$$

\qed

\bigskip

\noindent
\thanks{\textbf{Aknowledgements:} 
%{\color{blue}DP and ET was partially supported by .}
DP  was partially supported by Capes-Fapitec and CNPq. E.T. was partially supported by Conicyt PIA Grant No. 79150056, Foncedyt Iniciaci\'on Grant No. 11160817. Both authors are thankful to Departmento de Matem\'atica de UFS, and DMCC-Usach for the hospitality of their respective visits to these centers, which were also supported by Promob/Capes-Fapitec, Apoyo a Asistencia de Eventos Dicyt-Usach, Fondecyt and CNPq.

%%%%%%%%%%%%%%%%%%%%%%%%%%%%%%%%%%%%%%%%%%%%%%%%%%%%%%%%%%%%%%%%%%%%%%%%%%%%%%%%%%%%%%%%%%%%%%%%%%%%%%%%%%%%%%
%%%%%%%%%%%%%%%%%%%%%%%%%%%%%%%%%%%%%%%%%%%%%%%%%%%%%%%%%%%%%BIBLIOGRAPHY%%%%%%%%%%%%%%%%%%%%%%%%%%%%%%%%%%%%
%%%%%%%%%%%%%%%%%%%%%%%%%%%%%%%%%%%%%%%%%%%%%%%%%%%%%%%%%%%%%%%%%%%%%%%%%%%%%%%%%%%%%%%%%%%%%%%%%%%%%%%%%%%%%%


\begin{thebibliography}{00}


\bibitem {ART}
Ara\'{u}jo, D. J. , Ricarte, G.,  and  Teixeira, E. V. {\em Geometric gradient estimates for solutions to degenerate elliptic equations.} Calc. Var. Partial Differential Equations  53 (2015), no.~3-4, 605-625.

\bibitem{AR}
Attouchi, A. and Ruosteenoja, E. {\em Remarks on regularity for $p$-Laplacian type equations in non-divergence form.} J. Differential Equations 265 (2018) 1922-1961.

\bibitem{BB}
Barles, G. and Busca, J. {\em Existence and comparison results for fully nonlinear degenerate elliptic equations without zeroth-order term.} Comm. Partial Differential Equations, Volume 26, Issue 11-12 (2001) 2323-2337.


\bibitem{BCCI-Lip}
Barles, G., Chasseigne, E., Ciomaga, A. and Imbert, C. {\em Lipschitz Regularity of Solutions for Mixed Integro-Differential Equations.}
J. Diff. Eq., 252 (2012), 6012-6060.

\bibitem{BChI}
Barles, G., Chasseigne, E. and Imbert, C. {\em H\"older Continuity of Solutions of Second-Order NonLinear Elliptic Integro-Differential Equations.}  J. Eur. Math. Soc. (JEMS) 13 (2011), no. 1, 1-26


\bibitem{BI}
Barles, G. and Imbert, C. {\em Second-order Eliptic Integro-Differential Equations: Viscosity Solutions' Theory Revisited.} 
IHP Anal. Non Lin\'eare, Vol. 25 (2008) no. 3, 567-585.


\bibitem{BD1}I. Birindelli\ and\ F. Demengel, Comparison principle and Liouville type results for singular fully nonlinear operators, Ann. Fac. Sci. Toulouse Math. (6) {\bf 13} (2004), no.~2, 261--287. 


\bibitem{BD2} I. Birindelli\ and\ F. Demengel, Regularity for radial solutions of degenerate fully nonlinear equations, Nonlinear Anal. {\bf 75} (2012), no.~17, 6237--6249.

\bibitem{BD3}
Birindelli, I., and Demengel, F. {\em $C^{1, \beta}$ regularity for Dirichlet problems associated to fully nonlinear degenerate elliptic equations.} ESAIM Control Optim. Calc. Var. 20 (4) (2014) 1009-1024.

\bibitem{DFQ} G. D\'{a}vila, P. Felmer\ and\ A. Quaas, Harnack inequality for singular fully nonlinear operators and some existence results, Calc. Var. Partial Differential Equations {\bf 39} (2010), no.~3-4, 557--578. 


\bibitem{CS1} L. Caffarelli\ and\ L. Silvestre, Regularity theory for fully nonlinear integro-differential equations, Comm. Pure Appl. Math. {\bf 62} (2009), no.~5, 597--638. 

\bibitem{CS2}  L. Caffarelli\ and\ L. Silvestre, Regularity results for nonlocal equations by approximation, Arch. Ration. Mech. Anal. {\bf 200} (2011), no.~1, 59--88. 

\bibitem{CC} L. A. Caffarelli\ and\ X. Cabr\'{e}, {\it Fully nonlinear elliptic equations}, American Mathematical Society Colloquium Publications, 43, American Mathematical Society, Providence, RI, 1995.

\bibitem{ChJ}
Chasseigne, E. and Jakobsen, E. {\em On nonlocal quasilinear equations and their local limits.} J. Differential Equations 262 (2017), no. 6, 3759-3804.


%
%\bibitem{BP}
%Barles, G. and Perthame, B. {\em Exit Time Problems in Optimal Control And Vanishing Viscosity Method.} SIAM J. Control Opt., 26 (1988) 1133-1148.
%
%
\bibitem{usersguide} 
Crandall, M.G., Ishii H. and Lions, P.-L. {\em User's Guide to Viscosity Solutions of Second Order Partial Differential Equations.} Bull. Amer. Math. Soc. (N.S.), Vol. 27 (1992), no. 1, 1-67.

\bibitem{Hitch}
Di Neza, E., Palatucci, G. and Valdinoci, E. {\em Hitchhiker's Guide to the Fractional Sobolev Spaces.} Bull. Sci. Math., 136, (2012), no. 5, 521-573.


\bibitem {I}C. Imbert, Alexandroff-Bakelman-Pucci estimate and Harnack inequality for degenerate/singular fully non-linear elliptic equations, J. Differential Equations {\bf 250} (2011), no.~3, 1553--1574. 



\bibitem{IS} C. Imbert\ and\ L. Silvestre, $C^{1,\alpha}$ regularity of solutions of some degenerate fully non-linear elliptic equations, Adv. Math. {\bf 233} (2013), 196--206.



\bibitem{IL}
Ishii, H. and Lions, P.L. {\em Viscosity Solutions of Fully Nonlinear Second-Order Elliptic Partial Differential Equations } J. Differential Equations,
83(1) 26-78, 1990.


\bibitem{JLM}
Juutinen, P., Lindqvist, P., and Manfredi, J.J. {\em On the equivalence of viscosity solutions and weak solutions for a quasilinear equation.} SIAM J. Math. Anal. 33 (3) (2001) 699-717.
%\bibitem{GT}
%Gilbarg, D.  and Trudinger, N.S.  \emph{Elliptic partial differential equations of second order},  Springer-Verlag, Berlin 2001. 
% 
%
%\bibitem{LL}
%Lasry, J.M. and Lions, P.L. {\em Nonlinear elliptic Equations with Singular Boundary Conditions and Stochastic Control with State Constraints.}
%Math. Ann. 283, 583-630 (1989).
% 
% \bibitem{LP}
% Leonori, T.  and Porretta, A.  {\em Large solutions and gradient bounds for quasilinear elliptic equations.}  Communications in Partial Differential Equations http://dx.doi.org/10.1080/03605302.2016.1169286
%
% 
%\bibitem{KK}
%Kawohl, B. and Kutev, N. {\em A Study on Gradient Blow Up for Viscosity Solutions of Fully Nonlinear, 
%Uniformly Elliptic Equations.} Act. Math. Scientia (2012), 32 B (1): 15-40. 
%
%\bibitem{RS}
%Ros-Oton, X., Serra, J. {\em The Dirichlet problem for the fractional Laplacian: Regularity up to the boundary.} Journal de Math\`ematiques Pures et Appliqu\'ees (2014), Volume 101, Issue 3, 275-302.
%
%\bibitem{Topp}
%Topp, E. {\em Existence and Uniqueness for Integro-Differential Equations with Dominating Drift Terms.}
%Comm. Partial Differential Equations, Vol. 39 (2014), no. 8, 1523-1554.
\end{thebibliography}
\end{document}